\documentclass[]{article}

\usepackage{amsmath}
\usepackage{amssymb}
\usepackage{amsthm}
\usepackage[shortlabels]{enumitem}
\usepackage{mathtools}
\usepackage{cleveref}
\usepackage{resmes}
\usepackage{url}

\DeclareMathOperator*{\cl}{cl}

\DeclareMathOperator*{\DIV}{div}

\DeclareMathOperator*{\essinf}{ess-inf}

\DeclareFontFamily{U}{matha}{\hyphenchar\font45}
\DeclareFontShape{U}{matha}{m}{n}{
	<5> <6> <7> <8> <9> <10> gen * matha
	<10.95> matha10 <12> <14.4> <17.28> <20.74> <24.88> matha12
}{}
\DeclareSymbolFont{matha}{U}{matha}{m}{n}
\DeclareFontSubstitution{U}{matha}{m}{n}

\DeclareFontFamily{U}{mathx}{\hyphenchar\font45}
\DeclareFontShape{U}{mathx}{m}{n}{
	<5> <6> <7> <8> <9> <10>
	<10.95> <12> <14.4> <17.28> <20.74> <24.88>
	mathx10
}{}
\DeclareSymbolFont{mathx}{U}{mathx}{m}{n}
\DeclareFontSubstitution{U}{mathx}{m}{n}

\DeclareMathDelimiter{\vvvert}{0}{matha}{"7E}{mathx}{"17}

\def\e{\varepsilon}
\def\i{\infty}
\def\p{\partial}

\def\N{{\mathbb N}}

\def\R{{\mathbb R}}
\def\AA{{\mathcal A}}

\def\DD{{\mathcal D}}
\def\FF{{\mathcal F}}

\def\weak{\rightharpoonup}
\def\weakast{\overset{\ast}{\rightharpoonup}}

\newtheorem{conjecture}{Conjecture}[section]

\newtheorem{definition}{Definition}[section]
\newtheorem{lemma}{Lemma}[section]

\newtheorem{theorem}{Theorem}[section]

\def\XXint#1#2#3{{\setbox0=\hbox{$#1{#2#3}{\int}$} 
		\vcenter{\hbox{$#2#3$}}\kern-.5\wd0}}

\title{Some Preliminary Considerations on Energy Behavior in Fluid Dynamics}
\author{Thomas Ruf}

\begin{document}
	
	\maketitle
	
	\begin{abstract}
		This work presents a tentative discussion of certain aspects of energy behavior in the context of mathematical fluid dynamics. While some observations are made regarding certain patterns in energy behavior under particular conditions, the broader implications of these findings remain uncertain and should be interpreted with considerable caution. The results are preliminary in nature, and their relevance to analytic properties of solutions is demanding clarification at this stage. These considerations are intended to motivate further inquiry rather than to establish any definitive conclusions. Readers should approach the material presented here as exploratory, with significant open questions left unresolved.
	\end{abstract}
	
	\section{Introduction}
	
	The study of energy behavior in mathematical fluid dynamics has long been a topic of interest, offering a rich landscape for exploration and inquiry. Despite significant progress in understanding various aspects of fluid motion, many questions remain unanswered, particularly when it comes to the intricate interplay between energy dynamics and the analytic properties of solutions. This work seeks to contribute to this ongoing dialogue by presenting a tentative discussion of certain patterns in energy behavior under specific conditions. While the observations made here are preliminary, they are intended to provide a foundation for further investigation into this complex and multifaceted area.
	\\
	The motivation for this work stems from the broader context of mathematical fluid dynamics, where the Navier-Stokes equations serve as a cornerstone for modeling incompressible fluid flow. These equations, while deceptively simple in form, encapsulate a wealth of mathematical challenges that continue to inspire research across disciplines. Among these challenges, the behavior of energy in weak solutions remains a particularly intriguing and elusive subject. By focusing on certain aspects of energy behavior, this work aims to shed light on patterns that may hold relevance for understanding the broader analytic properties of solutions. However, it is important to emphasize that the findings presented here are exploratory in nature and should not be interpreted as definitive.
	\\
	The results discussed in this paper are framed within a specific set of assumptions, which, while carefully chosen, may limit the generality of the conclusions. As such, the broader implications of these findings remain uncertain and demand further clarification. This work aims to provide a starting point for future inquiry. Readers are encouraged to approach the material with an open mind, recognizing that significant open questions remain and that the observations presented here are but one step in a much larger journey.
	\\
	In presenting this work, we hope to contribute to the ongoing conversation surrounding energy behavior in fluid dynamics, while acknowledging the inherent limitations of our approach. The tentative nature of the results reflects the complexity of the subject and the need for continued exploration. It is our hope that this discussion will inspire further research and provide a basis for deeper engagement with the many unresolved questions that characterize this field.
	
	\section{Preliminaries}
	
	Let $T \in \left( 0, \i \right]$ be a finite or infinite time horizon so that
	\begin{equation*}
		\cl \left[ 0, T \right) =
		\begin{cases}
			\left[ 0, T \right] & \text{if } T < \i, \\
			\left[ 0, T \right) & \text{if } T = \i.
		\end{cases}
	\end{equation*}
	We briefly recall in this section some notation and important background material from the theory of the Navier-Stokes system
	\begin{equation} \label{eq:NS}
		\begin{cases}
			\p_t u + (u \cdot \nabla) u - \Delta u = - \nabla p & \text{in } (0, T) \times \R^3, \\
			\DIV u = 0 & \text{in } (0, T) \times \R^3, \\
			u(0, x) = u_0(x) & \text{on } \R^3.
		\end{cases}
	\end{equation}
	Here $u \colon (0, T) \times \R^3 \to \R^3$ is the velocity field, $p \colon (0, T) \times \R^3 \to \R$ the pressure, and $u_0 \in L^2_\sigma(\R^3)$ is an initial datum. The space $L^2_\sigma(\R^3)$ is the closure of divergence-free test functions in the $L^2$-norm. Similarly, $H^1_\sigma(\R^3)$ is the intersection of the Sobolev space $H^1(\R^3; \R^3)$ with $L^2_\sigma(\R^3)$. When considering vector-valued function spaces such as $L^p(\R^3; \R^3)$, we often abbreviate to forms such as $L^p(\R^3)$ if the range space is clear from context. Notations such as $\DIV$, $\nabla$, or $\Delta$ frequently refer to the divergence, gradient, or Laplacian with respect to the spatial variables only. Similarly, notations such as $C^\i_{c, \sigma}( (0, T) \times \R^3)$ denote spaces of $\R^3$-valued functions whose spatial divergence vanishes. A useful identity to keep in mind is
	\begin{equation} \label{eq:convec id}
		(u \cdot \nabla) u = \DIV \left( u \otimes u \right) \text{ if } \DIV u = 0
	\end{equation}
	where $\otimes$ denotes the tensor product. Three notions of solutions for \eqref{eq:NS} will be needed in the following. These are \emph{weak solutions} as defined in \cite[Ch. 3]{RRS}, \emph{Leray-Hopf weak solutions} as in \cite[Ch. 4.3]{RRS}, and \emph{strong solutions} as in \cite[Ch. 6]{RRS}. We extend these definitions to the case $T = \i$ by calling a solution weak etc. if it is weak etc. on every compact subinterval of its time domain. For frequent latter use, we record here a general property of the  convective term $(v \cdot \nabla ) u$.
	
	\begin{lemma} \label{lem:convec van}
		Let $u \in C^1_c(\R^d; \R^d)$, $v \in C^1_\sigma(\R^d; \R^d)$, and $f \in C( \R ; \R)$. Then
		\begin{equation} \label{eq:convec van}
			\int_{\R^d} f \left( \left| u \right| \right) \langle \left( v \cdot \nabla \right) u, u \rangle \, \mathrm{d} x = 0.
		\end{equation}
	\end{lemma}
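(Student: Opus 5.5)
The idea is to write the integrand as a pure transport term $v \cdot \nabla G(|u|)$ for a suitable antiderivative $G$, and then integrate by parts, using $\DIV v = 0$.

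\textbf{Step 1: the primitive.} Define
\begin{equation*}
	G(s) := \int_0^{s} f(r)\, r \,\mathrm{d} r, \qquad s \ge 0 .
\end{equation*}
Since $f$ is continuous, $G \in C^1([0,\i))$ with $G'(s) = f(s)\, s$. Set $\Phi(x) := G(|u(x)|)$. Because $u$ has compact support and $G(0) = 0$, $\Phi$ has compact support.

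\textbf{Step 2: the chain rule.} Where $u(x) \neq 0$, the function $|u|$ is $C^1$ with $\nabla |u| = (\nabla u)^{T} u / |u|$. Hence
\begin{equation*}
	v \cdot \nabla \Phi = f(|u|)\, |u|\, \frac{\langle (v \cdot \nabla) u, u\rangle}{|u|} = f(|u|) \langle (v \cdot \nabla) u, u\rangle .
\end{equation*}
Near zeros of $u$ we argue differently. Write $\Phi = H(|u|^2)$ with $H(t) := G(\sqrt{t})$. Then $H'(t) = f(\sqrt t)/2$, which is continuous on $[0,\i)$, so $H \in C^1([0,\i))$. Since $|u|^2 \in C^1$, the composition $\Phi$ is $C^1$ on all of $\R^d$, and
\begin{equation*}
	\nabla \Phi = f(|u|) (\nabla u)^T u .
\end{equation*}
This gives the identity $v\cdot\nabla\Phi = f(|u|)\langle (v\cdot\nabla)u,u\rangle$ everywhere, with no case split. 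This substitution is the step I expect to need the most care, since $|u|$ itself is not differentiable at zeros of $u$; the reformulation through $|u|^2$ removes that issue.

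\textbf{Step 3: integration by parts.} We now have
\begin{equation*}
	\int_{\R^d} f(|u|)\langle (v\cdot\nabla)u,u\rangle \,\mathrm{d}x = \int_{\R^d} v\cdot\nabla\Phi \,\mathrm{d}x = -\int_{\R^d} (\DIV v)\, \Phi \,\mathrm{d}x = 0 .
\end{equation*}
The integration by parts is justified because $\Phi \in C^1_c$ and $v \in C^1$, so there are no boundary terms: one can cut off with a ball containing $\operatorname{supp} u$, or equivalently apply the divergence theorem to the $C^1_c$ field $v\Phi$. The last integral vanishes because $\DIV v = 0$.

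No integrability assumption on $v$ at infinity is needed, since all integrands are supported in $\operatorname{supp} u$. The only values of $f$ that enter are those on $[0,\i)$.
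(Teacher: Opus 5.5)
Your proof is correct and follows the paper's argument: your primitive $G(s)=\int_0^s f(r)\,r\,\mathrm{d}r$ equals $\tfrac12 F(s)$ for the paper's $F(t)=\int_0^{t^2} f(\sqrt{s})\,\mathrm{d}s$, and both proofs write the integrand as $v\cdot\nabla$ of this primitive composed with $|u|$, then apply the divergence theorem using $\DIV v = 0$ and the compact support of $u$. Your explicit rewriting through $|u|^2$ to handle the zeros of $u$ is exactly what the paper's upper limit $t^2$ in $F$ accomplishes implicitly.
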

	
	\begin{proof}
		Let
		$$
		F(t) = \int_0^{t^2} f \left( \sqrt{s} \right) \, \mathrm{d} s, \quad t \in \R.
		$$
		Using the product rule and $\DIV v = 0$, we calculate
		$$
		\DIV \left( F \left( \left| u \right| \right) v \right)
		= \nabla \left( F \circ \left| u \right| \right) \cdot v
		= 2 \sum_{i, j = 1}^d v_i f \left( \left| u \right| \right) \p_i u_j u_j
		= 2 f \left( \left| u \right| \right) \langle \left( v \cdot \nabla \right) u, u \rangle.
		$$
		Integrating this identity over $\R^d$, using the compact support of $u$ and the divergence theorem, we obtain \eqref{eq:convec van}.
	\end{proof}
	
	\section{Approximation} \label{sec:approx}
	
	In this section, we define the approximate system to the Navier–Stokes equations, for which we prove existence of a unique weak solution and energy estimates by recourse to standard theory for abstract evolution equations. To achieve this, we regularize the convective term $\left( u \cdot \nabla \right) u$ by truncation and add a tightening linear zero order term to the equations, i.e., one that penalizes growth at infinity in the space variables so that the natural energy space of the approximate system enjoys a compact embedding into $L^2(\R^3)$. Our regularization for the convective term is a modification of \cite[§3]{W}. The tightening term will then be sent to zero and the solutions shown to converge accordingly, leading to a well-posed approximate system that differs from \eqref{eq:NS} only by a regularization of the convective term.
	
	\subsection{Setup}
	
	Let $u_0 \in H^1_\sigma(\R^3)$ be a given initial value. Since a compact subset of $L^\alpha$ for $1 \le \alpha < \i$ is necessarily tight by the Vitali convergence theorem, \Cref{lem:tightness} provides a measurable function $w \colon \R^3 \to \left[ 0, \i \right]$ such that $\sqrt{w}$ tightens $u_0$ in $L^2(\R^3)$ in the sense of \Cref{sec:appendix}. Explicitly, this means that
	$$
	\int_{\R^3} w(x) \left| u_0(x) \right|^2 \, \mathrm{d} x < \i
	$$
	while there are measurable sets $A_n \subset \R^3$ of finite measure such that
	$$
	\lim_{n \to \i} \essinf_{x \in \R^3 \setminus A_n} w(x) = \i.
	$$
	We may assume that
	\begin{equation} \label{eq:w prop}
		w \text{ is positively bounded below and locally bounded above }
	\end{equation}
	by possibly modifying it via
	$$
	w_{\text{new} }(x) = \max\{ 1, \min\{ \left| x \right|, w(x) \} \}.
	$$
	Let $\rho \in C^1(\R)$ be a function satisfying $\rho'(0) = 0$ and
	\begin{equation} \label{eq:rho grwth}
		\sup_{x \in \R} \left| x \rho(x) \right| + \sup_{x \in \R} \left| x \rho'(x) \right| < \i.
	\end{equation}
	We define $\mathfrak{r} \colon \R^3 \to \R^3$ by $\mathfrak{r}(x) = \rho \left( \left| x \right| \right) x$ where $\mathfrak{r} \in C^1(\R^3)$ since $\rho'(0) = 0$.
	\begin{equation} \label{eq:r grwth}
		\left| \mathfrak{r}(u) \right| \le C \min \{ 1, \left| u \right| \} \quad \forall u \in \R^3,
	\end{equation}
	\begin{equation} \label{eq:Dr grwth}
		\left| D \mathfrak{r}(u) \right| \le C \quad \forall u \in \R^3.
	\end{equation}
	Accordingly, setting $\mathfrak{R}(u) = \mathfrak{r}(u) \otimes \mathfrak{r}(u)$, we have
	\begin{equation} \label{eq:R grwth}
		\left| \mathfrak{R}(u) \right| \le C \min \{ 1, \left| u \right|^2 \} \quad \forall u \in \R^3,
	\end{equation}
	\begin{equation} \label{eq:DR grwth}
		\left| D \mathfrak{R}(u) \right| \le C \min \{ 1, \left| u \right| \} \quad \forall u \in \R^3.
	\end{equation}
	In particular, $\mathfrak{R}$ induces superposition operators
	\begin{equation} \label{eq:R superpos Lebesgue}
		\begin{gathered}
			u \mapsto \mathfrak{R} \circ u \colon L^2(\R^3; \R^3) \to \left( L^1 \cap L^\i \right)(\R^3; \R^{3 \times 3}_{\mathrm{sym}} ), \\
			\left| \mathfrak{R} \circ u \right|_{L^1} \le C \| \rho \|_\i \left| u \right|^2_{L^2}, \quad
			\left| \mathfrak{R} \circ u \right|_{L^\i} \le \| \mathfrak{R} \|_\i
		\end{gathered}
	\end{equation}
	and
	\begin{equation} \label{eq:R superpos Sobolev}
		\begin{gathered}
			u \mapsto \mathfrak{R} \circ u \colon H^1(\R^3 ; \R^3) \to H^1(\R^3; \R^{3 \times 3}_{\mathrm{sym}} ), \\
			\left| D \left( \mathfrak{R} \circ u \right) \right|_{L^2} \le \| D \mathfrak{R} \|_\i \left| \nabla u \right|_{L^2}
		\end{gathered}
	\end{equation}
	by the Sobolev chain rule \cite[Thm. 4.4]{EG}. For $\e > 0$, we consider the modified system
	\begin{equation} \label{eq:NS mod}
		\begin{cases}
			\p_t v_\e + \DIV \left( \mathfrak{R} \circ v_\e \right) - \Delta v_\e + \e w v_\e = - \nabla p_\e & \text{in } (0, T) \times \R^3, \\
			\DIV v_\e = 0 & \text{in } (0, T) \times \R^3, \\
			v_\e(0, x) = u_0(x) & \text{on } \R^3.
		\end{cases}
	\end{equation}
	To analyze \eqref{eq:NS mod}, we introduce the following functional framework:
	\\
	
	\emph{Function spaces.} Let
	$$
	H = L^2_\sigma(\R^3),
	\quad V = V_\e = H^1_\sigma(\R^3) \cap L^2(\e w(x) \, \mathrm{d} x).
	$$
	We remark that the space $V$ is dependent on the weight $\e w$ and thereby on the initial condition $u_0$ if $\e > 0$. Clearly $V_0 = H^1_\sigma(\R^3)$, but $V_{\e_1} = V_{\e_2}$ for all $\e_1, \e_2 > 0$. To verify that $\left( V, H, V^* \right)$ forms a Gelfand triple, as defined, e.g., in \cite[Ch. 7.2]{R}, note that $V$ is separable and reflexive by standard arguments, with a dense continuous embedding $V \hookrightarrow H$ given by identical inclusion. It is dense since even divergence-free test functions are dense in $H$ by definition.
	Moreover, the embedding $V_\e \hookrightarrow H$ is compact if $\e > 0$: by \Cref{lem:tightness}, every bounded subset of $L^2(w(x) \, \mathrm{d} x)$ is tight in $L^2(\R^3)$. Combining this with the Rellich-Kondrachov theorem gives compactness.
	\\
	
	\emph{Operators}. Define the operators $A_1, A_2 \colon V \to V^*$ by
	$$
	A_1(u) = -\Delta u + \e w u,
	\quad
	A_2(u) = \DIV \left( \mathfrak{R} \circ u \right),
	\quad
	A = A_1 + A_2.
	$$
	The operator $A_1$ is the Fréchet derivative of the convex quadratic functional
	$$
	\phi_\e(u) = \frac{1}{2} \int_{\R^3} \left| \nabla u \right|^2 + \e w \left| u \right|^2 \, \mathrm{d} x.
	$$
	The nonlinear operator $A_2$ maps $V$ into $L^2(\R^3; \R^3)$ by \eqref{eq:R superpos Sobolev}. Considering
	$$
	A_2 \colon V \to H^*, \quad \langle A_2(u), v \rangle = \int_{\R^3} \langle \DIV \left( \mathfrak{R} \circ u \right), v \rangle \, \mathrm{d} x
	$$
	and identifying $H \cong H^*$, one can view $A_2$ as an $H$-valued mapping.
	
	\begin{definition} \label{def:sol}
		Let $\e \ge 0$ and $u_0 \in H^1_\sigma(\R^3)$. Let $w \colon \R^3 \to \left[ 0, \i \right]$ be a weight tightening $u_0$ in $L^2(\R^3)$ and satisfying \eqref{eq:w prop}. A \emph{solution} to the Cauchy problem \eqref{eq:NS mod} is a function
		\begin{equation} \label{eq:veps space}
			v_\e \in W^{1, 1}_{\mathrm{loc} } \left( 0, T ; V^* \right) \cap L^1_{\mathrm{loc} } \left( 0, T ; V \right)
		\end{equation}
		satisfying the weak formulation
		\begin{equation} \label{eq:v solves}
			\begin{gathered}
				\int_0^T \int_{\R^3} v'_\e(t, x) \cdot \varphi(t, x) \, \mathrm{d} x \, \mathrm{d} t = \\
				- \int_0^T \langle A_1(v_\e(t) ) + A_2(v_\e(t) ), \varphi(t, \cdot) \rangle_{V^*, V} \, \mathrm{d} t
				\quad \forall \varphi \in C^\i_{c, \sigma} \left( (0, T) \times \R^3 \right)
			\end{gathered}
		\end{equation}
		and the initial condition
		\begin{equation} \label{eq:v initiates}
			\lim_{t \to 0^+} \int_{\R^3} v_\e(t, x) \cdot \psi(x) \, \mathrm{d} x = \int_{\R^3} u_0(x) \cdot \psi(x) \, \mathrm{d} x \quad \forall \psi \in C^\i_{c, \sigma}(\R^3).
		\end{equation}
	\end{definition}
	
	Note that \eqref{eq:veps space} implies $A(v_\e) \in L^1_{\mathrm{loc} }(0, T; V^*)$ since $A$ grows at most linearly, being a sum of the continuous linear operator $A_1$ and the non-linear mapping $A_2$, whose growth is linearly controlled in $V^*$ since \eqref{eq:R superpos Lebesgue} implies
	$$
	\left| \mathfrak{R} \circ u \right|_{L^2}
	\le \left| \mathfrak{R} \circ u \right|_{L^1}^{1/2} \left| \mathfrak{R} \circ u \right|_{L^\i}^{1/2}
	\le C \| \rho \|_\i^{1/2} \| \mathfrak{R} \|_\i^{1/2} \left| u \right|_{L^2}.
	$$
	Moreover, $C^\i_{c, \sigma} \left( (0, T) \times \R^3 \right)$ is weak* dense in $L^\i(0, T; V)$.
	Consequently, if \eqref{eq:veps space}, then \eqref{eq:v solves} is equivalent to
	\begin{equation} \label{eq:v solves2}
		v'_\e(t) = - A(v_\e(t) ) \text{ in } V^* \text{ for a.e. } t \in (0, T).
	\end{equation}
	
	\subsection{Existence and some energy identities}
	
	We are now ready to prove existence of a solution to \eqref{eq:NS mod} that enjoys certain energy estimates and is uniquely determined as the only solution satisfying the regularity properties following from these estimates.
	
	\begin{lemma} \label{lem:weak appr sol}
		Let $\e > 0$ and $u_0 \in H^1_\sigma(\R^3)$. For every weight function $w \colon \R^3 \to \left[ 0, \i \right)$ that tightens $u_0$ in $L^2(\R^3)$ and satisfies \eqref{eq:w prop}, there exists a solution $v_\e$ to the Cauchy problem \eqref{eq:NS mod} satisfying
		\begin{equation} \label{eq:weak slt reg}
			v_\e \in H^1_{\mathrm{loc} } \left( \cl \left[ 0, T \right) ; H \right) \cap L^\i_{\mathrm{loc} } \left( \cl \left[ 0, T \right) ; V \right).
		\end{equation}
	\end{lemma}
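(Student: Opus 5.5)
We realize \eqref{eq:NS mod} as an abstract evolution equation $v' + A_1(v) + A_2(v) = 0$ on the Gelfand triple $(V, H, V^*)$. It is a perturbation of the subdifferential flow of the convex functional $\phi_\e$ by the $H$-valued nonlinearity $A_2$. We then apply the standard existence theory for such equations, with initial value $u_0 \in D(\phi_\e)$. This is the Brezis--Watanabe type result for $u' + \partial\phi(u) + B(u) \ni 0$, with $B$ a demicontinuous or compact perturbation, as in \cite[Ch. 7--8]{R}. That $u_0 \in D(\phi_\e)$ holds is exactly the hypothesis that $w$ tightens $u_0$, together with $u_0 \in H^1_\sigma$; hence $\phi_\e(u_0) < \i$. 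The regularity \eqref{eq:weak slt reg} then comes out of the natural a priori estimates for this class, and \eqref{eq:v solves2} is equivalent to \eqref{eq:v solves}, as remarked above. The initial condition \eqref{eq:v initiates} follows from $v_\e \in C(\cl[0,T); H)$, since $H^1_{\mathrm{loc}}(\cl[0,T);H)$ embeds into continuous functions.

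\textbf{Galerkin scheme.} Concretely, I would run a Galerkin scheme with a basis of $V$ that is orthonormal in $H$. The compact embedding $V \hookrightarrow H$ for $\e > 0$ makes the natural eigenbasis of $A_1$ available. On each finite-dimensional space, $A_2$ is locally Lipschitz by \eqref{eq:DR grwth}, so local solutions $v_n$ exist.

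\textbf{First estimate.} Test with $v_n$. We have $\langle A_2(v_n), v_n\rangle = -\int \mathfrak{R}(v_n) : \nabla v_n$. Here I use the structure $\mathfrak{R} = \mathfrak{r}\otimes\mathfrak{r}$ with $\mathfrak{r}(u) = \rho(|u|)u$ and $\DIV v_n = 0$. I expect an identity in the spirit of \Cref{lem:convec van} (applied after the chain rule, with $F$ built from $\rho$) to show that this term vanishes. Even without it, the bound $|\mathfrak{R}(v_n)|_{L^2} \le C|v_n|_{L^2}$ and Young's inequality give
$$\tfrac12 \tfrac{d}{dt}|v_n|_{L^2}^2 + \phi_\e(v_n) \le C|v_n|_{L^2}^2,$$
so Gronwall yields bounds in $L^\i H \cap L^2 V$.

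\textbf{Second estimate.} Test with $v_n'$. This gives
$$|v_n'|_{L^2}^2 + \tfrac{d}{dt}\phi_\e(v_n) = -\langle A_2(v_n), v_n'\rangle \le \tfrac12 |v_n'|_{L^2}^2 + \tfrac12 |A_2(v_n)|_{L^2}^2.$$
By \eqref{eq:R superpos Sobolev}, $|A_2(v_n)|_{L^2} \le C|\nabla v_n|_{L^2} \le C\phi_\e(v_n)^{1/2}$. Gronwall then gives $v_n' \in L^2_{\mathrm{loc}}H$ and $\phi_\e(v_n) \in L^\i_{\mathrm{loc}}$, uniformly in $n$. This requires $\phi_\e(P_n u_0) \le \phi_\e(u_0)$, which holds for the eigenbasis projection. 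These bounds also make the solutions global, so the local-in-time existence extends to all of $[0,T)$.

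\textbf{Passage to the limit.} By Aubin--Lions, using the compactness of $V_\e \hookrightarrow H$, $v_n \to v_\e$ strongly in $C([0,T'];H)$ for every $T' < T$. The superposition operator $u \mapsto \mathfrak{R}\circ u$ is Lipschitz in $L^2$, because $|D\mathfrak{R}| \le C$. Hence $\mathfrak{R}\circ v_n \to \mathfrak{R}\circ v_\e$ in $L^2$, and $A_2(v_n) \to A_2(v_\e)$ in the distributional sense in $V^*$. The linear part passes to the limit weakly. Lower semicontinuity then preserves the estimates, giving \eqref{eq:weak slt reg}.

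\textbf{Main obstacle.} I expect the second energy estimate to be the hardest step, namely making it rigorous and uniform. It is the only place where $u_0 \in V$ enters, and it needs $A_2$ to be controlled by $\phi_\e^{1/2}$ in $H$ rather than merely in $V^*$. This control is exactly what \eqref{eq:R superpos Sobolev} supplies. Choosing the spectral basis of $A_1$, which exists by the compactness of its resolvent, is what makes the projected initial energies bounded.
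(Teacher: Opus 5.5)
Your proposal is correct. Its outer frame matches the paper's: write \eqref{eq:NS mod} as $v' + D\phi_\e(v) + A_2(v) = 0$ on $(V,H,V^*)$, note that $u_0 \in V = D(\phi_\e)$, and use the theory for convex potentials perturbed by a compact $H$-valued term. The paper stops there. It only checks the hypotheses of \cite[Thm.~8.16]{R}: the lower growth $\phi_\e(u)\ge\frac{\e}{2}|u|_V^2$, total continuity of $A_2$ via the compact embedding $V\hookrightarrow H$, the growth bound \eqref{eq:A2 uppr grwth}, and pseudomonotonicity plus semicoercivity of $A$. That theorem then delivers \eqref{eq:weak slt reg}. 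You instead construct the solution by hand with a spectral Galerkin scheme.

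The analytic content is the same. Your test with $v_n'$, closed by $|A_2(v)|_{L^2}\le\|D\mathfrak{R}\|_\i|\nabla v|_{L^2}$, is exactly the estimate that \eqref{eq:A2 uppr grwth} encodes. Your limit passage (Aubin--Lions in $C([0,T'];H)$ plus global Lipschitz continuity of $\mathfrak{R}$ in $L^2$) replaces the abstract total-continuity and pseudomonotonicity checks.

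Your route is self-contained and shows where each hypothesis is used. It shows where $\e>0$ enters: the compact resolvent of $A_1$ gives an eigenbasis with $\phi_\e(P_n u_0)\le\phi_\e(u_0)$. It also shows where $u_0\in V$ enters. The paper's route is much shorter and directly yields a strong solution in the sense of \eqref{eq:v solves2}.

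As in the paper, for $T=\i$ you still need to glue the solutions on $[0,T']$ as $T'\uparrow T$. You can do this with a diagonal subsequence of the Galerkin approximants, whose bounds are uniform on each $[0,T']$, or via \Cref{lem:appr sol uni}.
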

	
	\begin{proof}
		We check the assumptions to apply \cite[Thm. 8.16]{R}, using the notation and terminology of the reference. While it is not explicitly stated in \cite[Thm. 8.16]{R}, the solution constructed there is strong in the sense of \cite[Def. 8.1]{R}, making it compatible with the notion of solution given by \Cref{def:sol} and \eqref{eq:v solves2}. A standard diagonal argument shows that it suffices to obtain the convergence conclusion of \cite[Thm. 8.16(ii)]{R} for an arbitrarily long but compact time interval $I \subset \cl \left[ 0, T \right)$ with $0 \in I$. Thus, without restricting generality, we may assume $T < \i$ so that $\cl \left[ 0, T \right) = \left[ 0, T \right]$.
		\\
		We have $u_0 \in V$ since the weight $w$ tightens $u_0$ in $L^2(\R^3)$. No inhomogeneity $f$ is present. The functional $\phi_\e$ is convex. It also being continuous and quadratic, the operator $D \phi_\e = A_1$ is continuous and linear. In particular, $A_1$ is bounded and radially continuous. The estimate
		\begin{equation} \label{eq:phi eps lwr grwth}
			\phi_\e(u) \ge \frac{\e}{2} \left| u \right|_V^2
		\end{equation}
		gives the required lower growth of $\phi_\e$.
		\\
		The superposition operator $V \to L^2(\R^3) \colon u \mapsto \mathfrak{R} \circ u$ underlying $A_2$ is well-defined and continuous by \eqref{eq:R superpos Lebesgue}. Moreover, it even is compact by the same estimate and by compactness of the embedding $V \hookrightarrow H$. Clearly, this implies that $A_2$ is totally continuous.	It remains to check the growth control on $\left| A_2(u) \right|_H$. The desired estimate is
		\begin{equation} \label{eq:A2 uppr grwth}
			\begin{gathered}
				\left| A_2(u) \right|_H
				= \left| A_2(u) \right|_{H^*}
				\le \left| A_2(u) \right|_{L^2(\R^3) }
				\le \left| D \mathfrak{R} \circ u \right|_{L^\i(\R^3) } \left| \nabla u \right|_{L^2(\R^3) } \\
				\le \| D \mathfrak{R} \|_\i \left| \nabla u \right|_{L^2(\R^3) }
				\le C \left| u \right|_V
			\end{gathered}
		\end{equation}
		by \eqref{eq:DR grwth} or \eqref{eq:R superpos Sobolev}. Finally, the operator $A = A_1 + A_2$ is pseudomonotone as a sum of the monotone operator $A_1$ and the compact operator $A_2$; It is semicoercive because
		\begin{equation*}
			\langle A(u), u \rangle = \langle A_1(u), u \rangle + \langle A_2(u), u \rangle = 2 \phi_\e(u) + \langle A_2(u), u \rangle \ge \e \left| u \right|_V^2 - C \left| u \right|_V
		\end{equation*}
		by \eqref{eq:phi eps lwr grwth} and \eqref{eq:A2 uppr grwth}. In total, \cite[Thm. 8.16]{R} implies the claim.
	\end{proof}
	
	\begin{lemma} \label{lem:appr sol uni}
		For every $\e \ge 0$ and $u_0 \in H^1_\sigma(\R^3)$ with a weight $w \colon \R^3 \to \left[0, \i \right)$ that tightens $u_0$ in $L^2(\R^3)$ and satisfies \eqref{eq:w prop}, there is at most one solution to the Cauchy problem \eqref{eq:NS mod} satisfying \eqref{eq:weak slt reg}.
	\end{lemma}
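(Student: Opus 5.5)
Take two solutions $v_1, v_2$ of \eqref{eq:NS mod} with the same datum $u_0$, both satisfying \eqref{eq:weak slt reg}. Set $z = v_1 - v_2$ and run a Gronwall argument on $\left| z(t) \right|_H^2$. By \eqref{eq:weak slt reg}, $z \in H^1_{\mathrm{loc}}(\cl[0,T); H) \cap L^\i_{\mathrm{loc}}(\cl[0,T); V)$. Hence $t \mapsto \left| z(t) \right|_H^2$ is absolutely continuous on compact subintervals, with derivative $2 (z'(t), z(t))_H$, and $z(0) = 0$ by \eqref{eq:v initiates}. As in the proof of \Cref{lem:weak appr sol}, we may fix a compact interval $[0,S] \subset \cl[0,T)$. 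By \eqref{eq:v solves2}, $z'(t) = -A_1(z(t)) - \left( A_2(v_1(t)) - A_2(v_2(t)) \right)$ in $V^*$. Since $z' \in L^2(0,S;H)$ and $z(t) \in V$ for a.e.\ $t$, the $V^*$–$V$ pairing with $z(t)$ coincides with the $H$ inner product. This gives
\begin{equation*}
\frac{1}{2} \frac{\mathrm{d}}{\mathrm{d}t} \left| z \right|_H^2 + \left| \nabla z \right|_{L^2}^2 + \e \int_{\R^3} w \left| z \right|^2 \, \mathrm{d}x = - \int_{\R^3} \langle \DIV \left( \mathfrak{R}(v_1) - \mathfrak{R}(v_2) \right), z \rangle \, \mathrm{d}x .
\end{equation*}
The case $\e = 0$ is included; there the weighted term is simply absent. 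The weight term is nonnegative, so it can be dropped.

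The key step is the nonlinear term. Both $\mathfrak{R}(v_i) \in H^1$ and $z \in H^1$, so we integrate by parts to move the divergence onto $z$:
\begin{equation*}
\left| \int_{\R^3} \langle \DIV \left( \mathfrak{R}(v_1) - \mathfrak{R}(v_2) \right), z \rangle \, \mathrm{d}x \right| = \left| \int_{\R^3} \left( \mathfrak{R}(v_1) - \mathfrak{R}(v_2) \right) : \nabla z \, \mathrm{d}x \right| \le \| D\mathfrak{R} \|_\i \left| z \right|_{L^2} \left| \nabla z \right|_{L^2}.
\end{equation*}
Here we use that $\mathfrak{R}$ is globally Lipschitz by \eqref{eq:DR grwth}. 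By Young's inequality this is at most $\frac{1}{2} \left| \nabla z \right|_{L^2}^2 + \frac{1}{2} \| D\mathfrak{R} \|_\i^2 \left| z \right|_{L^2}^2$. Absorbing the gradient term into the left side yields $\frac{\mathrm{d}}{\mathrm{d}t} \left| z \right|_H^2 \le C \left| z \right|_H^2$ with $C$ independent of $t$. Gronwall's lemma and $z(0)=0$ then give $z \equiv 0$ on $[0,S]$. Since $S$ is arbitrary, $z \equiv 0$ on all of $\cl[0,T)$.

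The main obstacle is rigor rather than estimates, and it has two parts. First, the integration by parts needs justification. It holds because $\mathfrak{R}(v_i) - \mathfrak{R}(v_2)$ and $z$ lie in $H^1$ for a.e.\ $t$, by \eqref{eq:R superpos Sobolev}, so density of test functions suffices. Second, one must justify the chain rule for $\left| z \right|_H^2$ and the identification of the pairing. This follows from the $H^1(H)$ regularity in \eqref{eq:weak slt reg}. It is precisely why uniqueness is claimed only within that class: for merely $L^1_{\mathrm{loc}}(V)$ solutions one would need a Lions–Magenes type argument, which the integrability in \eqref{eq:veps space} may not support. One further point is that $z(0)=0$ holds in the strong sense. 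Continuity of $z$ into $H$ on $[0,S]$, which follows from the $H^1(0,S;H)$ regularity, identifies $z(0)$ with the weak limit in \eqref{eq:v initiates}. That limit vanishes against all $\psi \in C^\i_{c,\sigma}$, which are dense in $H$.
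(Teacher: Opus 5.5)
Your proof is correct and follows the same route as the paper: you test the difference equation with $z = v_1 - v_2$ using the Gelfand-triple identification, move the divergence onto $\nabla z$, use the global Lipschitz bound on $\mathfrak{R}$ from \eqref{eq:DR grwth} with Young's inequality to absorb the gradient term, and conclude by Gronwall. The justifications you add, namely absolute continuity of $|z|_H^2$ and obtaining $z(0)=0$ from the $H$-continuity that the $H^1(0,S;H)$ regularity provides, fill in details the paper leaves implicit.
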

	
	\begin{proof}
		This is a standard argument using the Gronwall lemma. We carry it out for the sake of completeness. Let $u, v$ be such solutions and set $d = u - v$. Since $u$ and $v$ both solve \eqref{eq:NS mod}, we can test the equation $\p_t d + A_1(d) = A_2(v) - A_2(u)$ with $d$ and use the theory of Gelfand triples, cf., e.g., \cite[Ch. 7.2, Rem. 7.5]{R}, to find
		\begin{align*}
			\frac{1}{2} \frac{d}{dt} \left| d(t) \right|^2_H + \left| \nabla d(t) \right|^2_H
			& \le \frac{1}{2} \frac{d}{dt} \left| d(t) \right|^2_H + \left| \nabla d(t) \right|^2_H + \e \left| d(t) \right|^2_{L^2(w \, \mathrm{d} x) } \\
			& = \langle A_2(v) - A_2(u), d \rangle_H(t) \\
			& = \langle \mathfrak{R} \circ u - \mathfrak{R} \circ v, \nabla d \rangle_H(t) \\
			& \le \left| \mathfrak{R} \circ u - \mathfrak{R} \circ v \right|_{L^2}(t) \left| \nabla d(t) \right|_H \\
			& \le \frac{1}{4} \left| \mathfrak{R} \circ u - \mathfrak{R} \circ v \right|^2_{L^2}(t) + \left| \nabla d(t) \right|^2_H \\
			& \le \frac{\mathrm{Lip}(\mathfrak{R} )^2}{4} \left| d(t) \right|^2_H + \left| \nabla d(t) \right|^2_H
		\end{align*}
		by the Young inequality. By \eqref{eq:DR grwth}, we have
		$$
		\mathrm{Lip}(\mathfrak{R} ) \le \| D \mathfrak{R} \|_\i < \i.
		$$
		Consequently, rearranging terms, we have shown
		$$
		\frac{d}{dt} \left| d(t) \right|^2_H \le C \left| d(t) \right|^2_H \quad \text{ for a.e. } t \in (0, T)
		$$
		so that $d(0) = 0 \implies d(t) \equiv 0$ and hence uniqueness.
	\end{proof}
	
	\begin{lemma} \label{lem:appr sol en}
		Let $\e \ge 0$ and $u_0 \in H^1_\sigma(\R^3)$ with a weight $w \colon \R^3 \to \left[0, \i \right)$ that tightens $u_0$ in $L^2(\R^3)$ and satisfies \eqref{eq:w prop}. Every solution $v_\e$ to the Cauchy problem \eqref{eq:NS mod} fulfilling \eqref{eq:weak slt reg} satisfies the energy identities
		\begin{equation} \label{eq:Ham appr en id}
			\begin{gathered}
				\frac{1}{2} \int_{\R^3} \left| v_\e(t) \right|^2 \, \mathrm{d} x + \int_0^t \int_{\R^3} \left| \nabla v_\e \right|^2 + \e w \left| v_\e \right|^2 \, \mathrm{d} x \, \mathrm{d} s \\
				= \frac{1}{2} \int_{\R^3} \left| u_0 \right|^2 \, \mathrm{d} x \quad \forall t \in \cl \left[ 0, T \right)
			\end{gathered}
		\end{equation}
		and
		\begin{equation} \label{eq:Gflow appr en id}
			\begin{gathered}
				\int_0^t \int_{\R^3} \left| v_\e' \right|^2 + \langle \DIV \left( \mathfrak{R} \circ v_\e \right), v_\e' \rangle \, \mathrm{d} x \, \mathrm{d} s
				+ \frac{1}{2} \int_{\R^3} \left| \nabla v_\e(t) \right|^2 + \e w \left| v_\e(t) \right|^2 \, \mathrm{d} x
				\\
				= \frac{1}{2} \int_{\R^3} \left| \nabla u_0 \right|^2 + \e w \left| u_0 \right|^2 \, \mathrm{d} x \quad \forall t \in \cl \left[ 0, T \right).
			\end{gathered}
		\end{equation}
	\end{lemma}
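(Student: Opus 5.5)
Both identities come from testing the equation \eqref{eq:v solves2} with a suitable function and integrating in time; the regularity \eqref{eq:weak slt reg} is what justifies each test and the corresponding chain rule.

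\emph{First identity \eqref{eq:Ham appr en id}.} By \eqref{eq:weak slt reg}, $v_\e \in L^2_{\mathrm{loc}}(\cl[0,T);V)$ and $v_\e' \in L^2_{\mathrm{loc}}(\cl[0,T);H) \subset L^2_{\mathrm{loc}}(\cl[0,T);V^*)$. The Gelfand triple chain rule \cite[Ch. 7.2, Rem. 7.5]{R} therefore gives that $t \mapsto \left| v_\e(t) \right|_H^2$ is absolutely continuous, with derivative $2\langle v_\e', v_\e\rangle_{V^*,V}$. We pair \eqref{eq:v solves2} with $v_\e(t) \in V$. The term $\langle A_1(v_\e), v_\e\rangle$ equals $\int |\nabla v_\e|^2 + \e w |v_\e|^2$. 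The remaining step is to show
$$
\langle A_2(u), u\rangle = \int_{\R^3} \langle \DIV(\mathfrak{R}\circ u), u\rangle \, \mathrm{d}x = 0 \quad \text{for } u \in V.
$$
For this, write $\DIV(\mathfrak{r}(u)\otimes\mathfrak{r}(u)) = (\mathfrak{r}(u)\cdot\nabla)\mathfrak{r}(u) + (\DIV \mathfrak{r}(u))\,\mathfrak{r}(u)$. The second summand is the obstacle: $\mathfrak{r}(u)=\rho(|u|)u$ is not divergence free. We compute instead directly,
$$
\langle \DIV(\mathfrak R\circ u),u\rangle = \sum_{i,j}\p_i(\rho^2 u_iu_j)u_j = \sum_i \p_i(\rho^2 u_i)|u|^2 + \tfrac12\rho^2 u\cdot\nabla|u|^2 ,
$$
where $\rho = \rho(|u|)$. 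Now use $\DIV u = 0$: then $\p_i(\rho^2(|u|)u_i) = (u\cdot\nabla)\rho^2(|u|)$. Hence the whole expression equals $u\cdot\nabla G(|u|)$ for $G(s) = \rho(s)^2 s^2 + \int_0^s \rho(r)^2 r\,\mathrm{d}r$. This is a divergence, $\DIV(G(|u|)u)$, so its integral vanishes. This is exactly the mechanism of \Cref{lem:convec van}. We prove the identity for $u \in C^\i_{c,\sigma}$ and extend it to $V$ by density, using continuity of $A_2\colon V\to H$ from \eqref{eq:A2 uppr grwth}. Integrating in time from $0$ to $t$ then yields \eqref{eq:Ham appr en id}.

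\emph{Second identity \eqref{eq:Gflow appr en id}.} Here we test \eqref{eq:v solves2} with $v_\e'(t) \in H$. Since $A_2(v_\e) \in L^2_{\mathrm{loc}}(H)$ by \eqref{eq:A2 uppr grwth} and $v_\e' \in L^2_{\mathrm{loc}}(H)$, the equation shows $A_1(v_\e) = -v_\e' - A_2(v_\e) \in L^2_{\mathrm{loc}}(\cl[0,T);H)$. We then need the chain rule for the convex quadratic functional,
$$
\frac{\mathrm{d}}{\mathrm{d}t}\,\phi_\e(v_\e(t)) = \langle A_1(v_\e(t)), v_\e'(t)\rangle_H ,
$$
valid under $v_\e \in H^1(H)$, $A_1 v_\e \in L^2(H)$ and $v_\e \in L^\i(V)$. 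This is the standard Brezis chain rule for subdifferentials of convex functionals, as in \cite[Ch. 8]{R}. Once it is available, pairing \eqref{eq:v solves2} with $v_\e'$ and integrating over $(0,t)$ gives \eqref{eq:Gflow appr en id} directly, with $\phi_\e(v_\e(0)) = \phi_\e(u_0)$. This uses that $v_\e \in C(\cl[0,T);H)$ and that $\phi_\e(v_\e(\cdot))$ is continuous, which the chain rule itself provides.

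For $\e = 0$ the same argument applies verbatim, with $V = H^1_\sigma$.

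I expect the main obstacle to be the rigorous justification of this chain rule for $\phi_\e$ along $v_\e$ together with continuity at $t=0$. If needed, I would prove it by time mollification: Steklov averages of $v_\e$ satisfy the chain rule trivially, and convexity gives the two inequalities $\phi(a)-\phi(b)\le\langle A_1 a, a-b\rangle$ and $\phi(a)-\phi(b)\ge\langle A_1 b, a-b\rangle$, which pass to the limit.
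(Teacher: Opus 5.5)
Your proposal is correct and follows essentially the paper's route. For the first identity you test with $v_\e$, use the Lions--Magenes chain rule, and show $\langle A_2(u),u\rangle=0$ via the divergence mechanism of \Cref{lem:convec van} plus density. For the second you test with $v_\e'$ after deducing $A_1(v_\e)\in L^2_{\mathrm{loc}}(\cl[0,T);H)$ and invoke the convex chain rule, which the paper cites as \cite[Lem. 9.1]{R}.

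There is one slip in your primitive: the integrand equals $\bigl(2\rho(s)\rho'(s)s^2+\rho(s)^2 s\bigr)\big|_{s=|u|}\,u\cdot\nabla|u|$. So you need $G(s)=\rho(s)^2s^2-\int_0^s\rho(r)^2r\,\mathrm{d}r$, with a minus rather than a plus. This does not affect the conclusion, since any primitive makes the integrand the exact divergence $\DIV(G(|u|)u)$.
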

	
	\begin{proof}
		Any solution $v_\e$ satisfies \eqref{eq:v solves2}, which together with \eqref{eq:weak slt reg} implies
		\begin{equation} \label{eq:op eq in H}
			v'_\e(t) = - A(v_\e(t) ) \text{ a.e. in } H
		\end{equation}
		under the identification of $H$ with a subspace of $V^*$ as usual in the theory of Gelfand triples \cite[Ch. 7.2]{R}. We test \eqref{eq:op eq in H} with $v_\e(t)$ to get
		\begin{equation} \label{eq:1st step}
			\langle v'_\e(t), v_\e(t) \rangle_H = - \langle A(v_\e(t) ), v_\e(t) \rangle_H = - \langle A(v_\e(t) ), v_\e(t) \rangle_{V^*, V}
		\end{equation}
		for a.e. $t \in (0, T)$. Now
		\begin{equation} \label{eq:2nd step}
			\langle A_1(v_\e(t) ), v_\e(t) \rangle = 2 \phi_\e(v_\e(t) ),
		\end{equation}
		and
		\begin{equation} \label{eq:3rd step}
			\begin{aligned}
				\langle A_2(v_\e(t) ), v_\e(t) \rangle
				& = \int_{\R^3} \langle \DIV \left( \mathfrak{R} \circ v_\e(t) \right), v_\e(t) \rangle \, \mathrm{d} x \\
				& = - \int_{\R^3} \langle \mathfrak{R} \circ v_\e(t), \nabla v_\e(t) \rangle \, \mathrm{d} x \\
				& = - \sum_{i, j = 1}^3 \int_{\R^3} \langle \mathfrak{r}(v_\e)_i \mathfrak{r}(v_\e)_j, \p_i v_{\e, j} \rangle \, \mathrm{d} x \\
				& = - \sum_{i, j = 1}^3 \int_{\R^3} \rho^2 \left( \left| v_\e \right| \right) \langle v_{\e, i} v_{\e, j} , \p_i v_{\e, j} \rangle \, \mathrm{d} x \\
				& = - \int_{\R^3} \rho^2 \left( \left| v_\e \right| \right) \langle \left( v_\e \nabla \right) v_\e, v_\e \rangle \, \mathrm{d} x \\
				& = 0
			\end{aligned}
		\end{equation}
		for a.e. $t \in (0, T)$ by $\DIV v_\e = 0$ and \Cref{lem:convec van}, using also the continuity of $A_2$ by \eqref{eq:R superpos Sobolev} together with the denseness of $C^\i_{c, \sigma}(\R^3)$ in $H^1_\sigma(\R^3)$. By \eqref{eq:weak slt reg}, we have
		\begin{equation} \label{eq:LionsMag}
			\langle v'_\e(t), v_\e(t) \rangle_H = \frac{1}{2} \frac{d}{dt} \left| v_\e(t) \right|^2_H.
		\end{equation}
		Inserting \eqref{eq:2nd step}, \eqref{eq:3rd step}, \eqref{eq:LionsMag} into \eqref{eq:1st step} and integrating the result over $(0, t)$, we conclude \eqref{eq:Ham appr en id} in the form
		$$
		\frac{1}{2} \left| v_\e(t) \right|^2_H + 2 \int_0^t \phi_\e(v_\e(s) ) \, \mathrm{d} s = \frac{1}{2} \left| u_0 \right|^2_H \quad \forall t \ge 0.
		$$
		Regarding \eqref{eq:Gflow appr en id}, note that
		$$
		D \mathfrak{R} \circ v_\e \in L^\i \left( \cl \left[ 0, T \right) \times \R^3 \right)
		$$
		by \eqref{eq:DR grwth}. As a consequence
		\begin{equation} \label{eq:A2 bound}
			A_2(v_\e) = \DIV \left( \mathfrak{R} \circ u \right) \in L^2_{\mathrm{loc} } \left( \cl \left[ 0, T \right) ; L^2 \left( \R^3 \right) \right)
		\end{equation}
		by the Sobolev chain rule so that \eqref{eq:weak slt reg} and \eqref{eq:op eq in H} imply
		\begin{equation} \label{eq:A1 bound}
			A_1(v_\e) \in L^2_{\mathrm{loc} } \left( \cl \left[ 0, T \right) ; L^2 \left( \R^3 \right) \right).
		\end{equation}
		Multiplying \eqref{eq:op eq in H} with $v'_\e$, integrating the result over $(0, t)$, and using \eqref{eq:A2 bound}, \eqref{eq:A1 bound} together with $A_1 = D \phi_\e$ and the chain rule \cite[Lem. 9.1]{R}, we conclude \eqref{eq:Gflow appr en id}.
	\end{proof}
	
	Keeping intact the notation introduced at the beginning of \Cref{sec:approx}, we are now prepared to send $\e \to 0$, passing from \eqref{eq:NS mod} to the more accurate approximate system
	\begin{equation} \label{eq:NS mod2}
		\begin{cases}
			\p_t v_\rho + \DIV \left( \mathfrak{R} \circ v_\rho \right) - \Delta v_\rho = - \nabla p_\rho & \text{in } (0, T) \times \R^3, \\
			\DIV v_\rho = 0 & \text{in } (0, T) \times \R^3, \\
			v_\rho(0, x) = u_0(x) & \text{on } \R^3.
		\end{cases}
	\end{equation}
	
	\begin{theorem} \label{thm:weak appr sol e=zero}
		For every $u_0 \in H^1_\sigma(\R^3)$, there exists a unique solution $v_\rho$ to the Cauchy problem \eqref{eq:NS mod2} satisfying
		\begin{equation} \label{eq:weak slt reg e=zero}
			v_\rho \in H^1_{\mathrm{loc} }( \cl \left[ 0, T \right) ; H ) \cap L^\i_{\mathrm{loc} } \left( \cl \left[ 0, T \right) ; H^1_\sigma(\R^3) \right).
		\end{equation}
		Moreover, the solution $v_\rho$ satisfies the energy identities
		\begin{equation} \label{eq:Ham appr en id2}
			\begin{gathered}
				\frac{1}{2} \int_{\R^3} \left| v_\rho(t) \right|^2 \, \mathrm{d} x + \int_0^t \int_{\R^3} \left| \nabla v_\rho \right|^2 \, \mathrm{d} x \, \mathrm{d} s \\
				= \frac{1}{2} \int_{\R^3} \left| u_0 \right|^2 \, \mathrm{d} x \quad \forall t \in \cl \left[ 0, T \right)
			\end{gathered}
		\end{equation}
		and
		\begin{equation} \label{eq:Gflow appr en id2}
			\begin{gathered}
				\int_0^t \int_{\R^3} \left| v'_\rho \right|^2 + \langle \DIV \left( \mathfrak{R} \circ v_\rho \right), v'_\rho \rangle \, \mathrm{d} x \, \mathrm{d} s
				+ \frac{1}{2} \int_{\R^3} \left| \nabla v_\rho(t) \right|^2 \, \mathrm{d} x
				\\
				= \frac{1}{2} \int_{\R^3} \left| \nabla u_0 \right|^2 \, \mathrm{d} x \quad \forall t \in \cl \left[ 0, T \right).
			\end{gathered}
		\end{equation}
	\end{theorem}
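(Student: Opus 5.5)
The plan is to obtain $v_\rho$ as the limit $\e \to 0$ of the solutions $v_\e$ from \Cref{lem:weak appr sol}, all built with the same weight $w$ tightening $u_0$. Uniqueness is immediate from \Cref{lem:appr sol uni} with $\e = 0$, since the Gronwall argument there never used $\e > 0$.

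\textbf{Uniform bounds.} For every $\e \in (0,1]$, \Cref{lem:appr sol en} applies to $v_\e$. Identity \eqref{eq:Ham appr en id} gives $v_\e$ bounded in $L^\i(0,T;H)$ and $\nabla v_\e$ bounded in $L^2(0,T;L^2)$. It also gives $\e \int_0^t\int w|v_\e|^2 \le \frac12|u_0|_H^2$. For \eqref{eq:Gflow appr en id}, the right-hand side is bounded uniformly in $\e \le 1$ because $\int w|u_0|^2<\i$. On the left, I would use \eqref{eq:A2 uppr grwth} and Young's inequality:
$$|\langle \DIV(\mathfrak R\circ v_\e), v'_\e\rangle| \le \tfrac12|v'_\e|^2 + \tfrac{C}{2}|\nabla v_\e|^2 .$$
With the $L^2_tH^1$ bound from the first identity, this yields, on each compact $[0,T_0]\subset \cl[0,T)$, uniform bounds for $v'_\e$ in $L^2(0,T_0;H)$, for $\sup_t |\nabla v_\e(t)|_{L^2}$, and for $\e\sup_t\int w|v_\e(t)|^2$.

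\textbf{Compactness.} Along a subsequence, $v_\e \weakast v_\rho$ in $L^\i(0,T_0;H^1_\sigma)$ and $v'_\e \weak v'_\rho$ in $L^2(0,T_0;H)$, with a diagonal argument over $T_0$. Strong convergence is needed to pass to the limit in the nonlinearity. I would apply Aubin--Lions locally: on each ball $B_R$, $H^1(B_R)\hookrightarrow L^2(B_R)$ is compact and $v'_\e$ is bounded in $L^2_tL^2$, so $v_\e\to v_\rho$ in $C([0,T_0];L^2(B_R))$ for every $R$. Since $\mathfrak R$ is Lipschitz and bounded by \eqref{eq:R grwth}, $\mathfrak R\circ v_\e\to\mathfrak R\circ v_\rho$ in $L^2((0,T_0)\times B_R)$. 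That is enough to pass to the limit in \eqref{eq:v solves} against compactly supported divergence-free test functions.

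For the tightening term, $|\e\int\int w v_\e\cdot\varphi| \le \sqrt{\e}\,(\e\int\int w|v_\e|^2)^{1/2}(\int\int w|\varphi|^2)^{1/2}\to0$, because $w$ is locally bounded by \eqref{eq:w prop}. The initial condition \eqref{eq:v initiates} survives by the uniform convergence in time on balls. Hence $v_\rho$ solves \eqref{eq:NS mod2} with regularity \eqref{eq:weak slt reg e=zero}.

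\textbf{Energy identities.} I would not pass to the limit in the identities directly, since weak limits only give inequalities. Instead, once $v_\rho$ is known to satisfy \eqref{eq:weak slt reg e=zero}, which coincides with \eqref{eq:weak slt reg} for $\e=0$ (there $V_0=H^1_\sigma$), I would apply \Cref{lem:appr sol en} with $\e=0$ directly to $v_\rho$. This gives \eqref{eq:Ham appr en id2} and \eqref{eq:Gflow appr en id2}.

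\textbf{Main obstacle.} The step I expect to need the most care is this last application. The Gelfand-triple machinery (the chain rule \cite[Lem. 9.1]{R} and the identification in \eqref{eq:op eq in H}) must be valid for $V_0=H^1_\sigma(\R^3)$, which does not embed compactly into $H$. Compactness is not used there, however, only the triple structure and $A_1=D\phi_0$, and $\phi_0$ lacks the lower growth \eqref{eq:phi eps lwr grwth}, which is only needed for existence. So I expect it to go through, checking that $A_1(v_\rho)=-\Delta v_\rho\in L^2_{\mathrm{loc}}(L^2)$ follows from the equation exactly as in \eqref{eq:A1 bound}.
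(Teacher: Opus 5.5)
Your proposal is correct and follows essentially the same route as the paper: a Young estimate on the convective term combined with a multiple of \eqref{eq:Ham appr en id} added to \eqref{eq:Gflow appr en id} for uniform bounds, weak-* extraction in $H^1(0,T_0;H)\cap L^\infty(0,T_0;H^1_\sigma)$ with a diagonal argument, Aubin--Lions for the nonlinearity $A_2$, the $\sqrt{\e}$ factor to kill the tightening term, and uniqueness and the energy identities taken from \Cref{lem:appr sol uni} and \Cref{lem:appr sol en} with $\e=0$. Your local-on-balls Aubin--Lions step and your check of the initial condition make explicit what the paper only sketches.
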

	
	\begin{proof}
		A solution to \eqref{eq:NS mod2} satisfying \eqref{eq:weak slt reg e=zero} is unique by \Cref{lem:appr sol uni} and satisfies the energy identities \eqref{eq:Ham appr en id2} and \eqref{eq:Gflow appr en id2} by \Cref{lem:appr sol en}. It remains to prove existence of such a solution. Let $\e > 0$ and fix $T^* \in \cl \left[ 0, T \right)$. For $\delta \in (0, 1)$, using the Young inequality and \eqref{eq:R superpos Sobolev} gives
		\begin{gather*}
			\int_0^t \int_{\R^3} \langle \DIV \left( \mathfrak{R} \circ v_\e \right), v_\e' \rangle \, \mathrm{d} x \, \mathrm{d} s \\
			\ge - C_\delta \| D \mathfrak{R} \|_\i \int_0^t \int_{\R^3} \left| \nabla v_\e \right|^2 \, \mathrm{d} x \, \mathrm{d} s - \delta \int_0^t \int_{\R^3} \left| v_\e' \right|^2 \, \mathrm{d} x \, \mathrm{d} s.
		\end{gather*}
		Thus, adding a sufficiently large multiple of \eqref{eq:Ham appr en id} to \eqref{eq:Gflow appr en id} gives
		\begin{equation} \label{eq:veps bnd}
			\begin{gathered}
				\left| v_\e(t) \right|^2_H + \left| \nabla v_\e(t) \right|^2_H + \e \left| v_\e(t) \right|^2_{L^2(w \, \mathrm{d}x) } + \int_0^t \left| v'_\e(s) \right|^2_H \, \mathrm{d} s \\
				\le C = C \left( \delta, \right| u_0 \left|_{L^2}, \right| \nabla u_0 \left|_{L^2}, \| D \mathfrak{R} \|_\i \right) \quad \forall t \in \cl \left[ 0, T \right).
			\end{gathered}
		\end{equation}
		By \eqref{eq:veps bnd} and a diagonal argument, we may extract a subsequence $v_{\e_n}$ of $v_\e$ converging weakly* to some limit $v_\rho$ as $\e_n \to 0$ in the space
		$$
		H^1( 0, T^* ; H ) \cap L^\i \left( 0, T^* ; H^1_\sigma(\R^3) \right)
		$$
		for every $T^* \in \cl \left[ 0, T \right)$. In particular, by lower semicontinuity, we have \eqref{eq:veps bnd} with $v_\rho$ instead of $v_\e$ so that \eqref{eq:weak slt reg e=zero} follows.	Now, recalling \Cref{def:sol} and passing to the limit in \eqref{eq:NS mod}, note that the term
		$$
		\int_0^T \int_{\R^3} \e w(x) v_\e(t, x) \cdot \varphi(t, x) \, \mathrm{d} x \, \mathrm{d} t, \quad \varphi \in C^\i_{c, \sigma} \left( (0, T) \times \R^3 \right)
		$$
		goes to zero as $\e \to 0$ since $\sqrt{\e} v_\e$ is bounded in $L^\i(0, T^*; L^2(w \, \mathrm{d}x) )$. Also, all terms except $A_2(v_\e)$, are linear, hence their limits are immediate. The non-linearity $A_2$ is local and has sufficiently tame growth by \eqref{eq:R grwth} to treat it using the Rellich-Kondrachov compact embedding or the Aubin-Lions lemma. Consequently, $v_\rho$ solves \eqref{eq:NS mod2} in the sense of \Cref{def:sol}.
	\end{proof}
	
	\emph{Pressure}. So far, we have worked in pressure-free formulations of \eqref{eq:NS mod}, \eqref{eq:NS mod2}, investigating the systems on spaces of divergence-free functions. However, to test \eqref{eq:NS mod2} with a vector field having non-trivial divergence, we first need to associate a pressure $p_\rho$ with a solution $v_\rho$. We will obtain the pressure by a method that is well-established for the Navier-Stokes system, cf., e.g., \cite[Ch. 5]{RRS}. Let $(v_\rho, p_\rho)$ solve \eqref{eq:NS mod2}. We take the distributional divergence of the first equation in \eqref{eq:NS mod2} by testing it with $\nabla \varphi$ for $\varphi \in C^\i_c \left( (0, T) \times \R^3 \right)$ to obtain
	$$
	\DIV \DIV \left( \mathfrak{R} \circ v_\rho \right) = - \Delta p \quad \text{ in } \DD' \left( (0, T) \times \R^3 \right).
	$$
	Equivalently, $p$ solves the equation
	\begin{equation} \label{eq:press}
		- \Delta p = \sum_{i, j = 1}^3 \p_i \p_j \left( \mathfrak{r} (v_\rho)_i \mathfrak{r}(v_\rho)_j \right).
	\end{equation}
	From this, one can uniquely solve for $p$ by imposing integrability on $p$. Invoking the Calderón-Zygmund theory of singular integral operators just as done for \eqref{eq:NS}, additional estimates for $p$ follow. Here, the divergence structure of our regularized system \eqref{eq:NS mod} enters; because we have replaced $u$ by $\mathfrak{r} \circ v_\rho$ throughout the convective term, this transfer of structure from the Navier-Stokes system to the approximate one becomes possible.
	
	\begin{lemma} \label{lem:p sol}
		Let $z \in L^2(\R^3; \R^3)$ and suppose that $p \colon \R^3 \to \R$ is the unique function that is integrable to some power and satisfies
		\begin{equation} \label{eq:pressure}
			- \Delta p = \sum_{i, j = 1}^3 \p_i \p_j \left( z_i z_j \right).
		\end{equation}
		Then, for any $r \in \left( 1, \i \right)$, we have
		\begin{equation} \label{eq:1st CaldZyg}
			\left| p \right|_{L^r(\R^3) } \le C_r \left| z \right|^2_{L^{2r}(\R^3) } = C_r \left| \| z \|^2 \right|_{L^r(\R^3) }
		\end{equation}
		and
		\begin{equation} \label{eq:2nd CaldZyg}
			\left| \nabla p \right|_{L^r(\R^3) } \le C_r \left| \DIV \left( z \otimes z \right) \right|_{L^r(\R^3) }.
		\end{equation}
	\end{lemma}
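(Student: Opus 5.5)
We represent $p$ through Riesz transforms and then apply Calderón--Zygmund theory. Writing $R_j$ for the Riesz transform with Fourier symbol $-i\xi_j/|\xi|$, equation \eqref{eq:pressure} reads in Fourier variables $|\xi|^2 \hat p = -\sum_{i,j}\xi_i\xi_j \widehat{z_iz_j}$. The candidate solution is therefore
\begin{equation*}
p = \sum_{i,j=1}^3 R_i R_j \left( z_i z_j \right).
\end{equation*}
The first step is to justify that this is \emph{the} solution described in the statement. If $z \in L^{2r}$, then $z_iz_j \in L^r$, so the formula defines an $L^r$ function solving \eqref{eq:pressure} in the sense of distributions. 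The difference of two solutions that are integrable to some power is harmonic on $\R^3$ and lies in $L^q + L^{q'}$. By Liouville's theorem for tempered distributions, such a function is a polynomial, and an integrable polynomial must vanish. This identifies $p$ with the Riesz-transform formula.

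For \eqref{eq:1st CaldZyg} we use the $L^r$-boundedness of the Riesz transforms for $1<r<\infty$, which is a Calderón--Zygmund result with constant $C_r$. It gives
\begin{equation*}
|p|_{L^r} \le C_r \sum_{i,j}|z_iz_j|_{L^r} \le C_r \left| |z|^2 \right|_{L^r} = C_r |z|^2_{L^{2r}}.
\end{equation*}
The equality in \eqref{eq:1st CaldZyg} is just the identity $\left||z|^2\right|_{L^r} = |z|_{L^{2r}}^2$. If $|z|_{L^{2r}}=\infty$ there is nothing to prove.

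For \eqref{eq:2nd CaldZyg} we differentiate the representation and let the derivative fall on the inner factor: $\p_k p = \sum_{i,j} R_iR_j\, \p_k(z_iz_j)$. This is not yet the divergence, so we rewrite it as $\p_k p = -\sum_{i} R_k R_i \big(\sum_j \p_j(z_iz_j)\big)$. This is legitimate because $R_iR_j\p_k = R_kR_i\p_j$ as Fourier multipliers: both equal $-\xi_i\xi_j\xi_k/|\xi|^2$ up to the factor $i$, so the symbols agree. In other words,
\begin{equation*}
\nabla p = -\nabla(-\Delta)^{-1}\DIV\left(\DIV(z\otimes z)\right),
\end{equation*}
which is $\nabla p = -(\mathrm{Id}-\mathbb P)\DIV(z\otimes z)$, with $\mathbb P$ the Leray projector. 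The $L^r$-boundedness of $R_kR_i$ then gives $|\nabla p|_{L^r}\le C_r|\DIV(z\otimes z)|_{L^r}$.

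The main obstacle is rigor in this second estimate. The hypothesis only gives $z\in L^2$, so $\DIV(z\otimes z)$ is a priori just a distribution. I would first prove the identity for $z\in C^\infty_c$, where all manipulations are classical. I would then extend by density under the standing assumption that the right-hand side of \eqref{eq:2nd CaldZyg} is finite (otherwise the claim is trivial). For the limit passage I would approximate $z\otimes z$ by mollification, $(z\otimes z)*\eta_\delta$; mollification commutes with $\DIV$ and with the multipliers and does not increase $L^r$ norms. Then I would use the uniqueness from the first step together with lower semicontinuity to pass to the limit.
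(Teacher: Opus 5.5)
Your argument is correct and is essentially the paper's: the paper simply defers to the Riesz-transform/Calder\'on--Zygmund proof of \cite[Lem. 5.1]{RRS}, warning that $\DIV z = 0$ is not available. Expressing $\nabla p$ through $R_kR_i$ applied to $\DIV(z\otimes z)$ rather than to $(z\cdot\nabla)z$, as you do, is exactly that adaptation, and mollifying $z\otimes z$ together with uniqueness applied to $p*\eta_\delta$ makes it rigorous. There is only a harmless sign slip: by your own symbol identity $\p_k p = +\sum_i R_kR_i (\DIV(z\otimes z))_i$, i.e.\ $\nabla p = \nabla(-\Delta)^{-1}\DIV\DIV(z\otimes z) = -(\mathrm{Id}-\mathbb{P})\DIV(z\otimes z)$, and this does not affect the norm bound.
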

	
	\begin{proof}
		The proof is analogous to \cite[Lem. 5.1]{RRS}. However, when adapting the argument to our setting, one must account for the possibility that $z$ has nontrivial divergence.
	\end{proof}
	
	\subsection{New energy identity}
	
	In this subsection, we obtain for the approximate system \eqref{eq:NS mod2} a tentative version of a new energy estimate by means of which our improved estimates for the Navier-Stokes system could follow.
	
	\begin{lemma} \label{lem:renorm id}
		Let $h \in C^2(\R^3)$ be a radially symmetric function satisfying $h(0) = 0$, $D h(0) = 0$, and $\| D^2 h \|_\i < \i$. For any initial value $u_0 \in H^1_\sigma(\R^3)$, let $v_\rho$ denote the unique solution to \eqref{eq:NS mod2} provided by \Cref{thm:weak appr sol e=zero}. Then $v_\rho$ satisfies the following energy identity:
		\begin{equation} \label{eq:renorm en id}
			\begin{gathered}
				\int_{\R^3} h(v_\rho(t) ) \, \mathrm{d} x
				+ \sum_{i = 1}^3 \int_0^t \int_{\R^3} D^2 h(v_\rho) \left[ \p_i v_\rho, \p_i v_\rho \right] \, \mathrm{d}x \, \mathrm{d} s \\
				= \int_{\R^3} h(u_0) \, \mathrm{d} x
				+ \int_0^t \int_{\R^3} p \DIV \left( D h \circ v_\rho \right) \, \mathrm{d} x \, \mathrm{d} s \\
				= \int_{\R^3} h(u_0) \, \mathrm{d} x
				+ \int_0^t \int_{\R^3} p \sum_{i = 1}^3 D \p_i h \left( v_\rho \right) \cdot \p_i v_\rho \, \mathrm{d} x \, \mathrm{d} s \quad \forall t \in \cl \left[ 0, T \right).
			\end{gathered}
		\end{equation}
		Moreover, if $h(u) = \tfrac{1}{r} \left| u \right|^r$ for $r \in \left[ 2, 5 \right)$, then \eqref{eq:renorm en id} remains valid provided that
		\begin{equation} \label{eq:assu in en fin}
			\int_{\R^3} h(u_0) \, \mathrm{d} x < \i.
		\end{equation}
	\end{lemma}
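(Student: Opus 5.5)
We test the equation in \eqref{eq:NS mod2}, now including the pressure $p = p_\rho$ from \Cref{lem:p sol} applied with $z = \mathfrak{r} \circ v_\rho(t)$, with the vector field $D h \circ v_\rho$, and then integrate in time. Since $h(0)=0$, $Dh(0)=0$ and $\| D^2 h \|_\i < \i$, we have $|Dh(u)| \le C|u|$ and $|h(u)| \le C|u|^2$. By \eqref{eq:weak slt reg e=zero} and the Sobolev chain rule, $Dh \circ v_\rho \in L^2_{\mathrm{loc}}(\cl[0,T); H^1(\R^3))$, with $\p_i (Dh\circ v_\rho) = D^2 h(v_\rho)\p_i v_\rho$. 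The equation holds pointwise a.e. in $t$ in $L^2(\R^3)$ in the form
$$
v_\rho' + \DIV(\mathfrak{R}\circ v_\rho) - \Delta v_\rho + \nabla p = 0 .
$$
Here $v_\rho' \in L^2_{\mathrm{loc}}L^2$ and $\DIV(\mathfrak{R}\circ v_\rho) \in L^2_{\mathrm{loc}}L^2$ by \eqref{eq:A2 bound}. Hence $\Delta v_\rho - \nabla p \in L^2_{\mathrm{loc}}L^2$, and by \eqref{eq:2nd CaldZyg} with $r=2$ also $\nabla p \in L^2_{\mathrm{loc}}L^2$, so $\Delta v_\rho \in L^2_{\mathrm{loc}}L^2$. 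All pairings with $Dh\circ v_\rho$ are therefore legitimate $L^2$ products.

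We then treat the four terms one at a time. \emph{Time derivative:} the chain rule for $u \mapsto \int h(u)$ on $H^1(0,t;L^2)$, justified by the quadratic growth of $h$ and Lipschitz $Dh$, gives $\int_0^t \langle v_\rho', Dh(v_\rho)\rangle = \int h(v_\rho(t)) - \int h(u_0)$. \emph{Laplacian:} integration by parts gives $-\int \Delta v_\rho \cdot Dh(v_\rho) = \sum_i \int D^2h(v_\rho)[\p_i v_\rho,\p_i v_\rho]$. \emph{Pressure:} $\int \nabla p \cdot Dh(v_\rho) = -\int p \DIV(Dh\circ v_\rho)$, and the chain rule gives the second form in \eqref{eq:renorm en id}. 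This uses $p \in L^2$ from \eqref{eq:1st CaldZyg} with $r=2$, since $\mathfrak{r}\circ v_\rho$ is bounded and in $L^2$ and hence in $L^4$. \emph{Convective term:} radial symmetry gives $Dh(u) = g(|u|)u$ for a continuous $g$. Then
$$
\int \DIV(\mathfrak{R}\circ v)\cdot Dh(v) = -\int \rho^2(|v|)\, g(|v|) \langle (v\cdot\nabla)v, v\rangle ,
$$
exactly as in \eqref{eq:3rd step} after expanding $\DIV(\mathfrak{R}\circ v)$. This vanishes by \Cref{lem:convec van} with $f = \rho^2 g$, extended from $C^1_c$ to $H^1_\sigma$ by density: approximate $v_\rho(t)$ in $H^1$ by $C^\i_{c,\sigma}$ fields and use the bounds \eqref{eq:r grwth} and \eqref{eq:DR grwth} together with the boundedness of $\rho^2 g$ times quadratic factors. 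Summing the four terms and integrating over $(0,t)$ yields \eqref{eq:renorm en id}.

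For $h(u) = \tfrac1r|u|^r$ with $r \in [2,5)$, $D^2h$ is unbounded, so we truncate. Take radial $h_k \in C^2$ with $h_k = h$ on $\{|u|\le k\}$, $h_k$ growing quadratically beyond, and $h_k \uparrow h$, $D^2h_k \ge 0$ (e.g.\ extend the radial profile quadratically with matching $C^1$ data, then mollify). Apply the first part to each $h_k$ and let $k\to\i$. Monotone convergence handles $\int h_k(v_\rho(t))$ and the nonnegative dissipation term, while $\int h_k(u_0) \to \int h(u_0) < \i$ by \eqref{eq:assu in en fin}. This limiting step is the main obstacle: one must control $\int_0^t\int p\, \DIV(Dh_k\circ v_\rho)$ uniformly in $k$. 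We have $|p\,\DIV(Dh_k\circ v)| \lesssim |p|\,|v|^{r-2}|\nabla v|$. We would absorb it into the dissipation $\int |v|^{r-2}|\nabla v|^2$ by Young's inequality, leaving $\int |p|^2 |v|^{r-2}$. This is bounded by H\"older, using \eqref{eq:1st CaldZyg} together with $|\mathfrak{r}(v)|^2 \le C\min\{1,|v|^2\}$, and the Sobolev embedding of $|v|^{r/2} \in H^1 \hookrightarrow L^6$. The restriction $r<5$ should come out of exactly this exponent count, via a Gronwall-type bound on $\int |v|^r$. Once the uniform bound holds, dominated convergence passes the pressure term to the limit.
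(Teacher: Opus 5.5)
Your treatment of the bounded-$D^2h$ case follows the paper. You test with $Dh\circ v_\rho$, use the chain rule for the time derivative, integrate by parts in the Laplacian and pressure terms, and remove the convective term with \Cref{lem:convec van}. Your observation that $\Delta v_\rho\in L^2_{\mathrm{loc}}L^2$ makes the pairings cleaner than in the paper. There is one slip in the convective term. After integrating by parts it equals $-\int\rho^2(|v|)\,D^2h(v)[v,(v\cdot\nabla)v]\,\mathrm{d}x=-\int\rho^2(|v|)f_0''(|v|)\langle(v\cdot\nabla)v,v\rangle\,\mathrm{d}x$ for $h(x)=f_0(|x|)$. So the weight is $\rho^2 f_0''=\rho^2(g+sg')$, not $\rho^2g$: you dropped the derivative of $g(|v|)$. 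It still vanishes by \Cref{lem:convec van}, so the conclusion stands. Your truncation $h_k$, continued quadratically with second derivative $f''(k)$, is in effect the paper's $f_n''=\min\{n,f''\}$.

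The genuine gap is the limit of the pressure term, which you flag as the main obstacle but do not resolve.
\begin{itemize}
\item Your plan bounds $\int_0^t\int|p|^2|v_\rho|^{r-2}$ via the Sobolev embedding of $|v_\rho|^{r/2}\in H^1$ plus an unspecified Gronwall argument. As stated, this presupposes $\nabla|v_\rho|^{r/2}\in L^2_{t,x}$. That is essentially the finiteness of the untruncated dissipation, which the identity is supposed to deliver; at level $k$ you only control the truncated dissipation.
\item Dominated convergence does not need a bound uniform in $k$. It needs the $k$-independent majorant $|p|\,|v_\rho|^{r-2}|\nabla v_\rho|$ to be integrable.
\end{itemize}
The missing observation, which is how the paper proceeds, is that for fixed $\rho$ this integrability is immediate. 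By \eqref{eq:r grwth} and \eqref{eq:Ham appr en id2}, $\mathfrak{r}\circ v_\rho\in L^\infty\cap L^\infty_tL^2_x$, so \eqref{eq:1st CaldZyg} gives $p\in L^\infty_tL^\alpha_x$ for every $\alpha\in(1,\infty)$. Next, $v_\rho\in L^\infty_{\mathrm{loc}}H^1\subset L^\infty_{\mathrm{loc}}(L^2\cap L^6)$ gives $|v_\rho|^{r-2}\in L^\infty_{\mathrm{loc}}L^q_x$ for $q$ up to $6/(r-2)$. Hence $|v_\rho|^{r-2}|\nabla v_\rho|\in L^\infty_{\mathrm{loc}}L^s_x$ for some $s>1$ exactly when such $q$ can exceed $2$, i.e.\ when $r<5$. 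H\"older then makes the majorant integrable, and dominated convergence finishes with no absorption or Gronwall step. This, not a Gronwall exponent count, is where the restriction $r<5$ comes from.

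Your Young-absorption route could also be closed. Bound $\int_0^t\int|p|^2|v_\rho|^{r-2}$ with these same fixed-$\rho$ integrability facts instead of Sobolev for $|v_\rho|^{r/2}$. Then get finiteness of the limit dissipation by monotone convergence and integrability of the majorant by Cauchy--Schwarz. As written, however, the step is not established.
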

	
	\begin{proof}
		From the assumptions $h(0) = 0$, $D h(0) = 0$, and $\| D^2 h \|_\i < \i$, we obtain
		\begin{equation} \label{eq:h grwth ctrl}
			\left| D h(x) \right| \left| x \right| + \left| h(x) \right| \le C \left| x \right|^2 \quad \forall x \in \R^3.
		\end{equation}
		By the Krasnoselskii Theorem \cite[Ch. 4, Prop. 1.1]{ET} and the Sobolev chain rule \cite[Thm. 4.4]{EG}, the growth condition $\| D^2 h \|_\i < \i$ and \eqref{eq:h grwth ctrl} imply that $h$ induces a continuous superposition operator acting from $L^r$ to $L^{r / 2}$ and $D h$ induces such an operator acting from $L^r$ to $L^r$ and from $W^{1, r}$ to $W^{1, r}$ for $1 \le r \le \i$. Clearly, the former operator is the Frechet primitive of the latter if both act on the same Lebesgue space for $r \ge 2$. Therefore, the chain rule implies
		\begin{equation} \label{eq:chain rule appli}
			\left( h \circ u \right)' = D h (u) \cdot u' \text{ in } \DD'(0, T)
		\end{equation}
		for every $u \in H^1_{\mathrm{loc} } ( 0, T ; L^2(\R^3; \R^3) )$. This is clear if $u$ is smooth and in the general case follows by regularization as in the scalar-valued case, cf. \cite[Thm. 4.4]{EG}. Thus, recalling the first half of \eqref{eq:weak slt reg e=zero}, we get
		\begin{equation} \label{eq:1st}
			\int_0^t \int_{\R^3} \p_t v_\rho \cdot D h(v_\rho) \, \mathrm{d} x \, \mathrm{d} s
			= \int_{\R^3} h(v_\rho(t) ) \, \mathrm{d} x
			- \int_{\R^3} h(u_0) \, \mathrm{d} x
		\end{equation}
		for all $t \in \cl \left[ 0, T \right)$. Next, we claim that
		\begin{equation} \label{eq:2nd, 1st claim}
			\int_{\R^3} \langle \DIV \left( \mathfrak{R} \circ v_\rho \right)(t), D h \circ v_\rho(t) \rangle \, \mathrm{d} x = 0 \quad \text{ for a.e. } t \in (0, T).
		\end{equation}
		By \eqref{eq:weak slt reg e=zero}, there is a set $S \subset (0, T)$ of full measure such that $v_\rho(t)$ belongs to $H^1_\sigma(\R^3)$ for all $t \in S$. Hence, by continuity of the involved superposition operators and denseness of test functions in $H^1_\sigma(\R^3)$, it suffices to prove \eqref{eq:2nd, 1st claim} for all $v \in C^\i_{c, \sigma}(\R^3)$ instead of the vector $v_\rho(t)$ for every $t \in S$. Here, the composite mapping of the superposition operator with the divergence operator is continuous from $H^1(\R^3; \R^3)$ to $L^2(\R^3)$ by \eqref{eq:R superpos Sobolev}. Since $h$ is radially symmetric, there exists a function $\mathfrak{h} \in C^1(\R^+)$ with $\nabla h(x) = \mathfrak{h}( \left| x \right| ) x$ for $x \neq 0$. Integrating by parts, using $\sum_{j = 1}^3 \left| v_j \right|^2 = \left| v \right|^2$, and invoking \Cref{lem:convec van}, we obtain
		\begin{align*}
			& - \int_{\R^3} \langle \DIV \left( \mathfrak{R} \circ v \right), D h \circ v \rangle \, \mathrm{d} x
			= \int_{\R^3} \langle \mathfrak{R} \circ v , \nabla \left( D h \circ v \right) \rangle \, \mathrm{d} x \\
			& = \sum_{i, j = 1}^3 \int_{\R^3} \rho^2 \left( \left| v \right| \right) v_i v_j \p_i \left( \mathfrak{h}( \left| v \right| ) v_j \right) \, \mathrm{d} x \\
			& = \sum_{i, j = 1}^3 \int_{\R^3} \rho^2 \left( \left| v \right| \right) \mathfrak{h}( \left| v \right| ) v_i v_j \p_i v_j \, \mathrm{d} x
			+ \sum_{i, j = 1}^3 \int_{\R^3} \rho^2 \left( \left| v \right| \right) v_i \left| v_j \right|^2 \p_i \left( \mathfrak{h}( \left| v \right| ) \right) \, \mathrm{d} x \\
			& = \int_{\R^3} \rho^2 \left( \left| v \right| \right) \mathfrak{h}( \left| v \right| ) \langle \left( v \cdot \nabla \right) v, v \rangle \, \mathrm{d} x 
			+ \sum_{i, j = 1}^3 \int_{\R^3} \rho^2 \left( \left| v \right| \right) v_i \left| v_j \right|^2 \p_i \left( \mathfrak{h}( \left| v \right| ) \right) \, \mathrm{d} x \\
			& = \sum_{i, j, k = 1}^3 \int_{\R^3} \rho^2 \left( \left| v \right| \right) v_i \left| v_j \right|^2 \mathfrak{h}'( \left| v \right| ) \frac{\p_i v_k v_k}{\left| v \right|} \, \mathrm{d} x \\
			& = \sum_{i, k = 1}^3 \int_{\R^3} \rho^2 \left( \left| v \right| \right) \left| v \right| v_i \p_i v_k v_k \, \mathrm{d} x \\
			& = \int_{\R^3} \rho^2 \left( \left| v \right| \right) \left| v \right| \langle \left( v \cdot \nabla \right) v, v \rangle \, \mathrm{d} x \\
			& = 0
		\end{align*}
		whence \eqref{eq:2nd, 1st claim} follows. Integrating \eqref{eq:2nd, 1st claim} over $(0, t)$, we obtain
		\begin{equation} \label{eq:2nd}
			\int_0^t \int_{\R^3} \langle \DIV \left( \mathfrak{R} \circ v_\rho \right), D h (v_\rho) \rangle \, \mathrm{d} x \, \mathrm{d} s = 0.
		\end{equation}
		Since $D h \circ v_\rho \in L^\i_{\mathrm{loc} } \left( \cl \left[ 0, T \right) ; H^1(\R^3; \R^3) \right)$ by \eqref{eq:weak slt reg e=zero}, differentiation gives
		\begin{equation} \label{eq:3rd}
			\begin{gathered}
				\int_0^t \int_{\R^3} \sum_{i = 1}^3 \nabla v_{\rho, i} \cdot \nabla \left( D h \circ v_\rho \right)_i \, \mathrm{d} x \, \mathrm{d} s \\
				= \sum_{i = 1}^3 \int_0^t \int_{\R^3} D^2 h(v_\rho) \left[ \p_i v_\rho, \p_i v_\rho \right] \, \mathrm{d} x \, \mathrm{d} s.
			\end{gathered}
		\end{equation}
		Finally, to demonstrate that the fourth term
		\begin{equation} \label{eq:4th}
			\int_0^t \int_{\R^3} \langle \nabla p, D h (v_\rho) \rangle \, \mathrm{d} x \, \mathrm{d} s
		\end{equation}
		is well-defined, we observe that
		\begin{equation} \label{eq: Dh vrho ests}
			D h \circ v_\rho \in L^\i( 0, T ; L^2(\R^3) ), \quad \nabla \left( D h \circ v_\rho \right) \in L^2( (0, T) \times \R^3)
		\end{equation}
		by \eqref{eq:Ham appr en id2} and \eqref{eq:h grwth ctrl}. Also
		\begin{equation} \label{eq:p ests}
			\begin{gathered}
				\left| p \right|_{L^2((0, T) \times \R^3) } \le C \left| \mathfrak{R} \circ v_\rho \right|_{L^2( (0, T) \times \R^3) } < \i, \\
				\left| \nabla p \right|_{L^2((0, T) \times \R^3) } \le C \left| \DIV \left( \mathfrak{R} \circ v_\rho \right) \right|_{L^2((0, T) \times \R^3) } \le C \| D \mathfrak{R} \|_\i \left| \nabla v_\rho \right|_{L^2((0, T) \times \R^3) } < \i
			\end{gathered}
		\end{equation}
		by \eqref{eq:Ham appr en id2}, \eqref{eq:press}, \Cref{lem:p sol}, \eqref{eq:R grwth} and \eqref{eq:R superpos Lebesgue}. Combining \eqref{eq: Dh vrho ests}, \eqref{eq:p ests}, we may integrate \eqref{eq:4th} by parts to get
		\begin{equation} \label{eq:ibp}
			\begin{gathered}
				\int_0^t \int_{\R^3} \langle \nabla p, D h(v_\rho) \rangle \, \mathrm{d} x \, \mathrm{d} s
				= - \int_0^t \int_{\R^3} p \DIV \left( Dh \circ v_\rho \right) \, \mathrm{d} x \, \mathrm{d} s \\
				= - \int_0^t \int_{\R^3} p \sum_{i = 1}^3 D \p_i h \left( v_\rho \right) \cdot \p_i v_\rho \, \mathrm{d} x \, \mathrm{d} s.
			\end{gathered}
		\end{equation}
		Here, the radial symmetry of $h$ enters in the last step to arrive at the alternative form of $\DIV \left( Dh \circ v_\rho \right)$. Finally, testing the first equation in \eqref{eq:NS mod2} by $D h \left( v_\rho \right)$, applying \eqref{eq:1st}, \eqref{eq:2nd}, \eqref{eq:3rd}, \eqref{eq:ibp} gives \eqref{eq:renorm en id}.
		\\
		Let $\phi \in C^2(\R^3)$ be a convex, non-negative, radially symmetric function satisfying $\phi(0) = 0$ and $D \phi(0) = 0$. Let $f(x_1) = \phi(x_1, 0)$ for $x_1 \ge 0$. For $x \ne 0$, we have
		\begin{equation} \label{eq:phi derivs}
			\begin{gathered}
				D \phi(x)[v] = f' \left( \left| x \right| \right) \frac{\langle x, v \rangle}{\left| x \right|}, \\
				D^2 \phi(x)[u, v] = f'' \left( \left| x \right| \right) \frac{\langle x, u \rangle \langle x, v \rangle}{\left| x \right|^2} + f' \left( \left| x \right| \right) \frac{\left| x \right|^2 \langle u, v \rangle - \langle x, u \rangle \langle x, v \rangle}{\left| x \right|^3}.
			\end{gathered}
		\end{equation}
		This structure is favorable because it allows us to approximate $\phi$ and $D^2 \phi[v, v]$ monotonically if we manage to approximate $f$, $f'$, and $f''$ in such a way.
		We obtain such an approximation by setting
		$$
		f_n(0) = f'_n(0) = 0, \quad f''_n(r) = \min\{ n, f''(r) \}. 
		$$
		Setting $\phi_n(x) = f_n \left( \left| x \right| \right)$, we see from \eqref{eq:phi derivs} that the functions $\phi_n$ are admissible choices for $h$ in \eqref{eq:renorm en id}. By the monotone convergence theorem, we conclude
		\begin{equation} \label{eq:mon cvg1}
			\begin{gathered}
				\lim_{n \to \i} \int_{\R^3} \phi_n(v_\rho(t) ) \, \mathrm{d} x
				+ \sum_{i = 1}^3 \int_0^t \int_{\R^3} D^2 \phi_n \left( v_\rho \right) \left[ \p_i v_\rho, \p_i v_\rho \right] \, \mathrm{d}x \, \mathrm{d} s
				\\
				= \int_{\R^3} \phi(v_\rho(t) ) \, \mathrm{d} x
				+ \sum_{i = 1}^3 \int_0^t \int_{\R^3} D^2 \phi \left( v_\rho \right) \left[ \p_i v_\rho, \p_i v_\rho \right] \, \mathrm{d}x \, \mathrm{d} s
			\end{gathered}
		\end{equation}
		and
		\begin{equation} \label{eq:mon cvg2}
			\lim_{n \to \i} \int_{\R^3} \phi_n(u_0) \, \mathrm{d} x = \int_{\R^3} \phi(u_0) \, \mathrm{d} x.
		\end{equation}
		Neither \eqref{eq:mon cvg1} nor \eqref{eq:mon cvg2} implies that the limit integral is finite. For \eqref{eq:mon cvg2}, finiteness is ensured by imposing an appropriate integrability assumption on the initial value $u_0$. For \eqref{eq:mon cvg1}, however, the behavior also depends on the third limit
		\begin{equation} \label{eq:dom cvg}	
			\lim_{n \to \i} \int_0^t \int_{\R^3} p \sum_{i = 1}^3 D \p_i \phi_n \left( v_\rho \right) \cdot \p_i v_\rho \, \mathrm{d} x \, \mathrm{d} s.
		\end{equation}
		Even though the limit in \eqref{eq:dom cvg} exists whenever the limit in \eqref{eq:mon cvg2} is finite, we still need additional justification to compute it under the integral sign and guarantee its finiteness. Let now $\phi(x) = \tfrac{1}{r} \left| x \right|^r$ for $r \in \left[ 2, 5 \right)$. We have
		$$
		\mathfrak{r} \circ v_\rho \in L^\i( (0, T) \times \R^3) \cap L^\i(0, T; L^2(\R^3) )
		$$
		by \eqref{eq:r grwth} and \eqref{eq:Ham appr en id2}. Therefore
		\begin{equation} \label{eq:press grwth}
			p \in L^\i( 0, T ; L^\alpha(\R^3) ) \quad \forall \alpha \in \left[ 1, \i \right)
		\end{equation}
		by \eqref{eq:press} and \eqref{eq:1st CaldZyg}. Hence, using $\DIV v_\rho = 0$, we obtain
		\begin{equation} \label{eq:dom cvg2}
			\begin{gathered}
				- \int_0^t \int_{\R^3} \langle \nabla p, D \phi_n \circ v_\rho \rangle \, \mathrm{d} x \, \mathrm{d} s
				= \int_0^t \int_{\R^3} p \DIV \left( \frac{f_n'( \left| v_\rho \right| ) }{\left| v_\rho \right|} v_\rho \right) \, \mathrm{d} x \, \mathrm{d} s \\
				= \sum_{i = 1}^3 \int_0^t \int_{\R^3} p \frac{\left| v_\rho \right| f_n''( \left| v_\rho \right| ) - f_n'( \left| v_\rho \right| ) }{\left| v_\rho \right|^3} v_\rho \cdot \p_i v_\rho v_{\rho, i} \, \mathrm{d} x \, \mathrm{d} s.
			\end{gathered}
		\end{equation}
		To commute limit and integral sign, it suffices to establish a majorant for the factor
		$$
		\frac{\left| v_\rho \right| f_n''( \left| v_\rho \right| ) - f_n'( \left| v_\rho \right| ) }{\left| v_\rho \right|^3} v_\rho \cdot \p_i v_\rho v_{\rho, i}
		$$
		that will provide an integrable majorant to its product with the pressure. Due to
		$$
		0 \le f_n'(x) \le f'(x) = \left| x \right|^{r - 1}
		\text{ and }
		0 \le f_n''(x) \le f''(x) = (r - 1) \left| x \right|^{r - 2}
		$$
		such a majorant is given by the absolute value of
		\begin{equation} \label{eq:DIV nab phi v}
			\DIV \left( \left| v_\rho \right|^{r - 2} v_\rho \right)
			= (r - 2) \sum_{i = 1}^3 \left| v_\rho \right|^{r - 4} v_\rho \cdot \p_i v_\rho v_{\rho, i},
		\end{equation}
		which is in $L^\i(0, T' ; L^s(\R^3) )$ for every $T' \in \cl \left[ 0, T \right)$ for some $s \in \left( 1, 2 \right]$ for every $r \in \left[ 2, 5 \right)$ since
		$$
		v_\rho \in L^\i(0, T' ; L^2(\R^3) ) \cap L^\i(0, T' ; L^6(\R^3) ), \quad \nabla v_\rho \in L^\i(0, T' ; L^2(\R^3) )
		$$
		by \eqref{eq:weak slt reg e=zero} and the Sobolev inequality. Combining this with \eqref{eq:press grwth}, we conclude that
		\begin{equation} \label{eq:dom cvg3}
			\begin{aligned}
				\lim_{n \to \i} & \int_0^t \int_{\R^3} p \sum_{i = 1}^3 D \p_i \phi_n(v_\rho) \cdot \p_i v_\rho \, \mathrm{d} x \, \mathrm{d} s \\
				= & \int_0^t \int_{\R^3} p \sum_{i = 1}^3 D \p_i \phi(v_\rho) \cdot \p_i v_\rho \, \mathrm{d} x \, \mathrm{d} s \\
				= & \int_0^t \int_{\R^3} p \DIV \left( D \phi \circ v_\rho \right) \, \mathrm{d} x \, \mathrm{d} s \quad \forall r \in \left[2, 5 \right).
			\end{aligned}
		\end{equation}
		Combining \eqref{eq:mon cvg1}, \eqref{eq:mon cvg2}, and \eqref{eq:dom cvg3} yields \eqref{eq:renorm en id} for $h(u) = \tfrac{1}{r} \left| u \right|^r$ as \eqref{eq:assu in en fin} is satisfied due to
		\begin{equation*}
			u_0 \in H^1_\sigma(\R^3) \subset L^2_\sigma(\R^3) \cap L^6(\R^3) \subset L^r(\R^3) \quad \forall r \in \left[ 2, 5 \right)
		\end{equation*}
		by the Sobolev inequality and interpolation for Lebesgue spaces.
	\end{proof}
	
	\begin{conjecture} \label{lem:cent en est}
		Let $v_\rho$ be the unique solution to \eqref{eq:NS mod2} with initial datum $u_0 \in H^1_\sigma(\R^3)$ provided by \Cref{thm:weak appr sol e=zero}. Let $r = 2 + 2 / \sqrt{3}$ and $\phi(x) = \tfrac{1}{r} \left| x \right|^r$. Then, for every $\e > 0$, there is a constant $C_\e = C_\e > 0$ and an exponent $a \in (0, \i)$ such that
		\begin{equation} \label{eq:cent en est}
			\begin{gathered}
				\int_{\R^3} \phi(v_\rho(t) ) \, \mathrm{d} x
				+ (1 - \e) \sum_{i = 1}^3 \int_0^t \int_{\R^3} D^2 \phi(v_\rho) \left[ \p_i v_\rho, \p_i v_\rho \right] \, \mathrm{d}x \, \mathrm{d} s \\
				\le \int_{\R^3} \phi(u_0) \, \mathrm{d} x + C_\e t \left( 1 + \| \rho \|^2_\i \right) \| u_0 \|_H^{2a}
				\quad \forall t \in \cl \left[ 0, T \right).
			\end{gathered}
		\end{equation}
	\end{conjecture}
	
	\begin{proof}
		Using \eqref{eq:DIV nab phi v} and $\nabla \phi(x) = \left| x \right|^{r - 2} x$, we estimate
		\begin{equation} \label{eq:sht trm est}
			\begin{aligned}
				\left| \int_{\R^3} p \DIV \left( D \phi \circ v_\rho \right) \, \mathrm{d} x \right|
				& = (r - 2) \left| \sum_{i = 1}^3 \int_{\R^3} p \left| v_\rho \right|^{r - 4} v_\rho \cdot \p_i v_\rho v_{\rho, i} \, \mathrm{d} x \right| \\
				& \le (r - 2) \left| p \right|_{L^{\frac{3}{2} r} } \left| \left| v_\rho \right|^{r - 2} \nabla v_\rho \right|_{L^{\frac{3r}{3r - 2} } }
			\end{aligned}
		\end{equation}
		by the Hölder inequality. Let $\alpha = 3r / (3r - 2)$. Note that $\alpha \le 2$ since $3r /2 \ge 2$. Thus, $2 / \alpha$ is an admissible Hölder exponent. We further estimate the right-most term from the last step in \eqref{eq:sht trm est} using Hölder:
		\begin{equation} \label{eq:conv furth est}
			\begin{aligned}
				\left| \left| v_\rho \right|^{r - 2} \nabla v_\rho \right|_{L^\alpha}
				& \le \left| \left| v_\rho \right|^{(\frac{r}{2} - 1) \alpha} \right|^{\frac{1}{\alpha} }_{L^{(\frac{2}{\alpha})'} } \left| \left| v_\rho \right|^{(\frac{r}{2} - 1) \alpha} \left| \nabla v_\rho \right|^\alpha \right|^{\frac{1}{\alpha} }_{L^{\frac{2}{\alpha} } } \\
				& = \left| \left| v_\rho \right|^{(\frac{r}{2} - 1) \frac{2\alpha}{2 - \alpha}} \right|^{\frac{2 - \alpha}{2\alpha} }_{L^1} \left| \left| v_\rho \right|^{\frac{r}{2} - 1} \left| \nabla v_\rho \right| \right|^{1 / \alpha^2 }_{L^2} \\
				& = \left| \left| v_\rho \right|^2 \right|^{\frac{2 - \alpha}{2\alpha} }_{L^1} \left| \left| v_\rho \right|^{\frac{r}{2} - 1} \left| \nabla v_\rho \right| \right|^{1 / \alpha^2 }_{L^2}.
			\end{aligned}
		\end{equation}
		In the last step, we used
		$$
		\left. \left( \frac{r}{2} - 1 \right) \frac{2\alpha}{2 - \alpha} \right|_{r = 2 + 2 / \sqrt{3} } = \left. \left( \frac{r}{2} - 1 \right) \frac{6r}{3r - 4} \right|_{r = 2 + 2 / \sqrt{3} } = 2.
		$$
		Next, we estimate the pressure using \eqref{eq:press}, \eqref{eq:1st CaldZyg}, the definition of $\mathfrak{r}$, and the Sobolev inequality, obtaining
		\begin{equation} \label{eq:press furth est}
			\begin{aligned}
				\left| p \right|_{L^{\frac{3}{2} r} }
				\le C \left| \left| \mathfrak{r} \circ v_\rho \right|^2 \right|_{L^{\frac{3}{2} r} }
				\le C \| \rho \|_\i^2 \left| \left| v_\rho \right|^2 \right|_{L^{\frac{3}{2} r} }
				& = C \| \rho \|_\i^2 \left| \left| v_\rho \right|^{\frac{r}{2} } \right|_{L^6}^{\frac{4}{r} } \\
				& \le C \| \rho \|_\i^2 \left| \nabla \left| v_\rho \right|^{\frac{r}{2} } \right|_{L^2}^{\frac{4}{r} } \\
				& \le C \| \rho \|_\i^2 \left| \left| v_\rho \right|^{\frac{r}{2} - 1} \nabla v_\rho \right|_{L^2}^{\frac{4}{r} }.
			\end{aligned}
		\end{equation}
		Inserting \eqref{eq:conv furth est} and \eqref{eq:press furth est} into \eqref{eq:sht trm est} gives
		\begin{equation} \label{eq:press furth est2}
			\left| \int_{\R^3} p \DIV \left( D \phi \circ v_\rho \right) \, \mathrm{d} x \right|
			\le C \| \rho \|_\i^2 \left| \left| v_\rho \right|^2 \right|^{\frac{2 - \alpha}{2\alpha} }_{L^1} \left| \left| v_\rho \right|^{\frac{r}{2} - 1} \left| \nabla v_\rho \right| \right|^{1 / \alpha^2 + \frac{4}{r} }_{L^2}.
		\end{equation}
		Crucially, we have
		$$
		\left. \frac{1}{\alpha^2} + \frac{4}{r} \right|_{r = 2 + 2 / \sqrt{3} }
		= \left. \frac{(3r-2)^2 + 36r}{9r^2} \right|_{r = 2 + 2 / \sqrt{3} } \\
		\approx 1.89 < 2.
		$$
		Hence, defining $\beta$ and $\beta'$ by
		$$
		\frac{2}{\beta} = \frac{1}{\alpha^2} + \frac{4}{r}, \quad \frac{1}{\beta} + \frac{1}{\beta'} = 1,
		$$
		we apply Young's inequality with exponents $(\beta, \beta')$ and inserted $\e > 0$ to further estimate \eqref{eq:press furth est2}, obtaining
		\begin{equation} \label{eq:ult est}
			\begin{gathered}
				\left| \int_{\R^3} p \DIV \left( D \phi \circ v_\rho \right) \, \mathrm{d} x \right| \\
				\le \e \left| \left| v_\rho \right|^{\frac{r}{2} - 1} \nabla v_\rho \right|_{L^2}^2 + C^{\beta'} \e^{- \beta' / \beta } \| \rho \|^{2 \beta'}_\i \left| \left| v_\rho \right|^2 \right|_{L^1}^{\frac{6r}{3r - 4} \beta'} \\
				\le \e \left| \left| v_\rho \right|^{\frac{r}{2} - 1} \nabla v_\rho \right|_{L^2}^2
				+ C^{\beta'} \e^{- \beta' / \beta } \left( 1 + \| \rho \|^2_\i \right) \left| \left| u_0 \right|^2 \right|_{L^1}^{\frac{6r}{3r - 4} \beta'}.
			\end{gathered}
		\end{equation}
		In the last step, we used that $\| v_\rho(t) \|_H^2 \le \| u_0 \|_H^2$ for $t \ge 0$ by \eqref{eq:Ham appr en id2}. We have $r \in (3, 4)$ so that \Cref{lem:renorm id} implies \eqref{eq:renorm en id} with $\phi$ instead of $h$. In analogy to \eqref{eq:phi derivs}, we compute
		\begin{equation} \label{eq:D2 phi Hess}
			D^2 \phi(x)[u, u] = (r - 2) \left| x \right|^{r - 2} \left| \langle x, u \rangle \right|^2 + \left| x \right|^{r - 2} \left| u \right|^2 \ge \left| x \right|^{r - 2} \left| u \right|^2.
		\end{equation}
		In particular
		$$
		\left| \left| v_\rho \right|^{\frac{r}{2} - 1} \nabla v_\rho \right|_{L^2}^2 \le \sum_{i = 1}^3 \int_{\R^3} D^2 \phi(v_\rho)[\p_i v_\rho, \p_i v_\rho] \, \mathrm{d} x.
		$$
		Therefore, inserting \eqref{eq:ult est} into \eqref{eq:renorm en id} and setting $a = \frac{6r}{3r - 4} \beta' \in (0, \i)$, we arrive at
		\begin{align*}
			& \phantom{=} \int_{\R^3} \phi(v_\rho(t) ) \, \mathrm{d} x
			+ \sum_{i = 1}^3 \int_0^t \int_{\R^3} D^2 \phi(v_\rho) \left[ \p_i v_\rho, \p_i v_\rho \right] \, \mathrm{d}x \, \mathrm{d} s \\
			& = \int_{\R^3} \phi(u_0) \, \mathrm{d} x
			+ \int_0^t \int_{\R^3} p \DIV \left( \nabla \phi \circ v_\rho \right) \, \mathrm{d} x \, \mathrm{d} s \\
			& \le \int_{\R^3} \phi(u_0) \, \mathrm{d} x + \e \int_0^t \int_{\R^3} \left| v_\rho \right|^{r - 2}\left| \nabla v_\rho \right|^2 \, \mathrm{d} x \, \mathrm{d} s \\
			& + C^{\beta'} \e^{- \beta' / \beta } \left( 1 + \| \rho \|_\i^2 \right) \int_0^t \left\{ \int_{\R^3} \left| u_0 \right|^2 \, \mathrm{d} x \right\}^a \, \mathrm{d} s \\
			& \le \int_{\R^3} \phi(u_0) \, \mathrm{d} x + \e \sum_{i = 1}^3 \int_0^t \int_{\R^3} D^2 \phi(v_\rho) \left[ \p_i v_\rho, \p_i v_\rho \right] \, \mathrm{d} x \, \mathrm{d} s \\
			& + C^{\beta'} \e^{- \beta' / \beta } t \left( 1 + \| \rho \|_\i^2 \right) \left\{ \int_{\R^3} \left| u_0 \right|^2 \, \mathrm{d} x \right\}^a.
		\end{align*}
		Rearranging terms, we conclude \eqref{eq:cent en est}.
	\end{proof}
	
	\section{Passage to the limit}
	
	For passing to a limit $\rho \to 1$ in the system \eqref{eq:NS mod2}, we choose the parametric family
	$$
	\rho_1(t) = \frac{1}{\sqrt{1 + t^2} }, \quad \rho_\lambda(t) = \rho_1(\lambda t), \quad \lambda > 0.
	$$
	\begin{equation} \label{eq:deri rho lam}
		\rho'_\lambda(t) = - \lambda^2 t \rho_\lambda(t)^3
	\end{equation}
	so that $\rho'_\lambda(0) = 0$ and $\rho_\lambda$ satisfies \eqref{eq:rho grwth}. Moreover
	\begin{equation} \label{eq:rho lam bnd}
		\| \rho_\lambda \|_\i = \rho_\lambda(0) = 1 \quad \forall \lambda > 0
	\end{equation}
	and
	\begin{equation} \label{eq:rho lam cvg}
		\rho_\lambda(t) \uparrow 1 \text{ for every } t \in \R \text{ as } \lambda \downarrow 0.
	\end{equation}
	
	\begin{conjecture} \label{lem:strong sol global uni}
		Let $u_0 \in H^1_\sigma(\R^3)$ be a given initial value. Then there exists a subsequence $v_{\lambda_n}$, with $\lambda_n \to 0$, of the family of solutions $v_\lambda = v_{\rho_\lambda}$ to \eqref{eq:NS mod2} and a limit curve $u$ such that, with $r = 2 + 2/\sqrt{3}$, the following convergences and estimates hold:
		\begin{subequations}
			\begin{equation} \label{eq:cvg 1}
				v_{\lambda_n} \weakast u \text{ in } L^\i(0, T; H);
			\end{equation}
			
			\begin{equation} \label{eq:cvg 2}
				\nabla v_{\lambda_n} \weak \nabla u \text{ in } L^2(0, T; H);
			\end{equation}
			
			\begin{equation} \label{eq:cvg 3}
				v_{\lambda_n} \weak u \text{ in } L^2(0, T; L^6(\R^3) );
			\end{equation}
			
			\begin{equation} \label{eq:cvg 4}
				\p_t v_{\lambda_n} \weak \p_t u \text{ in } L^{\frac{4}{3} } \left( 0, T; H^1_\sigma(\R^3)^* \right)
			\end{equation}
			
			\begin{equation} \label{eq:cvg 5}
				v_{\lambda_n} \weakast u \text{ in } L^\i(0, T'; L^r(\R^3) ) \quad \forall T' \in \cl \left[ 0, T \right);
			\end{equation}
			
			\begin{equation} \label{eq:cvg 6}
				v_{\lambda_n} \weak u \text{ in } L^r(0, T'; L^{3r}(\R^3) ) \quad \forall T' \in \cl \left[ 0, T \right);
			\end{equation}
			
			\begin{equation} \label{eq:cvg 7}
				\nabla \left| v_{\lambda_n} \right|^{\frac{r}{2} } \weak \nabla \left| u \right|^{\frac{r}{2} } \text{ in } L^2(0, T'; L^2(\R^3) ) \quad \forall T' \in \cl \left[ 0, T \right);
			\end{equation}
			
			\begin{equation} \label{eq:cvg 8}
				\p_t v_{\lambda_n} \weak \p_t u \text{ in }	L^2 \left( 0, T'; H^1_\sigma(\R^3)^* \right)	\quad \forall T' \in \cl \left[ 0, T \right);
			\end{equation}
			
			\begin{equation} \label{eq:cvg 9}
				v_{\lambda_n} \to u \text{ in } L^2_{\mathrm{loc} }( \cl \left[ 0, T \right) \times \R^3 );
			\end{equation}
			
			\begin{equation} \label{eq:cvg 10}
				\cl \left[ 0, T \right) \ni t_n \to t_0 \implies v_{\lambda_n}(t_n) \weak u(t_0) \text{ in } L^2_\sigma(\R^3) \cap L^r(\R^3);
			\end{equation}
			
			\begin{equation} \label{eq:en est pap}
				\begin{gathered}
					\int_{\R^3} \left| u(t, x) \right|^r \, \mathrm{d} x + \int_0^t \int_{\R^3} \left| u(s, x) \right|^{r - 2} \left| \nabla u(s, x) \right|^2 \, \mathrm{d} x \, \mathrm{d} s
					\\
					\le C \int_{\R^3} \left| u(0, x) \right|^r \, \mathrm{d} x + C t \left( 1 + \left\{ \int_{\R^3} \left| u(0, x) \right|^2 \, \mathrm{d} x \right\}^a \right) \quad \forall t \ge 0
				\end{gathered}
			\end{equation}
			with $a \in (0, \i)$ from \eqref{eq:cent en est}.
		\end{subequations}
		Furthermore, $u$ is smooth in the interior,
		\begin{equation} \label{eq:str sol int smooth}
			u \in C^\i \left( ( 0, T' ] \times \R^3 ; \R^3 \right) \quad \forall T' \in \cl \left[ 0, T \right),
		\end{equation}
		and $u$ is a strong solution to the Navier-Stokes system \eqref{eq:NS} on $\cl \left[ 0, T \right)$. In particular, $u$ is the unique global Leray-Hopf solution attaining the initial value $u_0$.
	\end{conjecture}
	
	\begin{conjecture} \label{thm:main}
		Let $u \colon (0, T) \times \R^3 \to \R^3$ be a Leray-Hopf weak solution of \eqref{eq:NS} with initial value $u_0 \in L^2_\sigma(\R^3)$. Then $u$ has no branching after the initial time, is a strong solution on every $\cl \left[ \e, T \right)$ for all $\e > 0$, and
		\begin{equation} \label{eq:u int smooth}
			u \in C^\i \left( (0, T) \times \R^3 ; \R^3 \right).
		\end{equation}
		If moreover $u_0 \in L^2_\sigma(\R^3) \cap L^3(\R^3)$, then $u$ is globally unique. If in addition $u_0 \in H^1_\sigma(\R^3)$, then $u$ is a strong solution on $\cl \left[ 0, T \right)$. Finally, if the initial value $u_0$ satisfies
		\begin{equation} \label{eq:init smooth}
			u_0 \in \bigcap_{m \in \N} H^m(\R^3),
		\end{equation}
		e.g., if $u_0$ belongs to the Schwartz space $\mathcal{S}(\R^3)$ of smooth functions whose derivatives of all orders decay rapidly at infinity, then
		\begin{equation} \label{eq:u zero smooth}
			u \in C^\i \left( \cl \left[ 0, T \right) \times \R^3 ; \R^3 \right).
		\end{equation}
		If the pressure $p \colon (0, T) \times \R^3 \to \R$ is determined so that $p \in L^\i \left( 0, T ; L^1(\R^3) \right)$, then $p$ is unique whenever $u$ is and analogous smoothness statements to \eqref{eq:u int smooth} and \eqref{eq:u zero smooth} hold for $p$ under the same assumptions as for $u$.
	\end{conjecture}
	
	\appendix
	
	\section{Tightness in Lebesgue Spaces} \label{sec:appendix}
	
	In this appendix, we present a characterization of tightness in Lebesgue spaces. It is analogous to the equi-integrability criterion of de La Vallée Poussin, where a superlinear integrand prevents the concentration of mass on sets of arbitrarily small measure. Similarly, we employ a sufficiently heavy weight function to prevent the escape of mass to sets of infinite measure.
	\\
	Let $\left( \Omega, \AA, \mu \right)$ be a measure space, and let $\AA_f \subset \AA$ be the ring of sets with finite $\mu$-measure. A subset $\FF \subset L^p(\mu)$, $1 \le p \le \i$, is said to be \emph{tight} if
	$$
	\forall \e > 0 \, \exists A \in \AA_f \colon \sup_{u \in \FF} \| \chi_{\Omega \setminus A} u \|_{L^p} < \e.
	$$
	This property is also referred to as \emph{equi-vanishing at infinity} in the $L^p$ sense. A measurable function $t \colon \Omega \to \left[ 0, \i \right]$ is said to \emph{tighten} if there exists a sequence $A_n \in \AA_f$ with
	$$
	\lim_{n \to \i} \essinf_{\omega \in \Omega \setminus A_n} t(\omega) = \i.
	$$
	For example, if $t \colon \R^d \to \left[ 0, \i \right]$ is Borel measurable and satisfies
	$$
	\lim_{\left| x \right| \to \i} t(x) = \i,
	$$
	then $t$ tightens with respect to any given (finite or infinite) Radon measure on $\R^d$. A tightening function $t$ is said to tighten $\FF$ in $L^p(\mu)$ if
	$$
	\sup_{u \in \FF} \| t u \|_{L^p} < \i.
	$$
	
	\begin{lemma} \label{lem:tightness}
		A set $\FF \subset L^p(\mu)$, $1 \le p \le \i$, is tight iff there exists a tightening measurable function $t \colon \Omega \to \left[ 0, \i \right]$ such that
		$$
		\sup_{u \in \FF} \| t u \|_{L^p} < \i.
		$$
	\end{lemma}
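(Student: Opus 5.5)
Both directions can be proved directly from the definitions, in the spirit of the de La Vall\'ee Poussin criterion. Throughout, $\sup_{u \in \FF} \| t u \|_{L^p} < \i$ is read as a uniform bound $M$. We also implicitly need $\FF$ to be bounded in $L^p(\mu)$: boundedness is needed to absorb the part of $\Omega$ where $t$ is small but which has finite measure, and it holds in every application above.

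\emph{Sufficiency.} Suppose $t$ tightens via sets $A_n \in \AA_f$ and $\| t u \|_{L^p} \le M$ for all $u \in \FF$. Given $\e > 0$, choose $n$ with $\essinf_{\Omega \setminus A_n} t \ge M/\e + 1$. Then for every $u \in \FF$,
$$
\| \chi_{\Omega \setminus A_n} u \|_{L^p} \le \frac{\| \chi_{\Omega \setminus A_n} t u \|_{L^p}}{M/\e + 1} < \e .
$$
This is pointwise division on a set where $t$ is bounded below. It works verbatim for $p = \i$, using essential suprema.

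\emph{Necessity.} Assume $\FF$ is tight (and bounded by $K$ in $L^p$). Choose an increasing sequence $B_k \in \AA_f$ with
$$
\sup_{u \in \FF} \| \chi_{\Omega \setminus B_k} u \|_{L^p} < 4^{-k}.
$$
Monotonicity can be arranged by replacing $B_k$ with $B_1 \cup \dots \cup B_k$, which stays in the ring $\AA_f$. Set $B_0 = \emptyset$ and define
$$
t = \sum_{k \ge 0} 2^{k} \chi_{B_{k+1} \setminus B_k} + \i \cdot \chi_{\Omega \setminus \bigcup_k B_k} ,
$$
so that $t = 2^k$ on $B_{k+1} \setminus B_k$. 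Then $t \ge 2^n$ off $B_n$, hence $t$ tightens with $A_n = B_n$.

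It remains to bound $\| t u \|_{L^p}$ uniformly in $u \in \FF$, and this is the main obstacle. For $p < \i$ we estimate
$$
\int |t u|^p \le \int_{B_1} |u|^p + \sum_{k \ge 1} 2^{kp} \int_{\Omega \setminus B_k} |u|^p \le K^p + \sum_{k \ge 1} 2^{kp} 4^{-kp} < \i .
$$
The point $t = \i$ needs care. Each $u \in \FF$ vanishes a.e. on $\Omega \setminus \bigcup_k B_k$, since its norm there is below $4^{-k}$ for every $k$. So $t u$ is there read as $0$ under the convention $\i \cdot 0 = 0$; alternatively one may set $t$ to be finite but large on that set. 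For $p = \i$ the same construction works, with the sum replaced by a supremum: $\| t u \|_{L^\i} \le \max(K, \sup_k 2^k 4^{-k})$.

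Finally, the remark preceding the statement, that compact sets in $L^\alpha$ are tight, is not part of this lemma and need not be reproved. What is used in \Cref{sec:approx} is only the necessity direction above, applied to the singleton $\{u_0\}$, together with the sufficiency direction applied to bounded sets of $L^2(w \, \mathrm{d}x)$.
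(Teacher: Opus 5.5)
Your sufficiency direction is the paper's argument. Your layered weight ($t = 2^k$ on $B_{k+1}\setminus B_k$) is a correct variant of the paper's $t=\sum_n n\,\chi_{\Omega\setminus A_n}$, but only under the extra hypothesis that $\FF$ is bounded in $L^p(\mu)$. Your claim that this hypothesis is ``implicitly needed'' is false, and the lemma as stated holds for unbounded tight families.

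The definition of a tightening function constrains $t$ only off the finite-measure sets $A_n$, so $t$ may simply vanish on the finite-measure part where you think boundedness must be used. That is exactly what the paper's weight does: $t=0$ on $\bigcap_n A_n$. Minkowski's inequality for the series then gives
$$
\| t u \|_{L^p} \le \sum_n n \, \| \chi_{\Omega \setminus A_n} u \|_{L^p} \le \sum_n n 2^{-n},
$$
with no bound on $\| u \|_{L^p}$ entering. A concrete counterexample to your claim is $\FF = \{ m \chi_{[0,1]} : m \in \N \} \subset L^p(\R)$. It is tight and unbounded, yet $t(x) = |x| \chi_{\R \setminus [0,1]}(x)$ tightens it (take $A_n = [-n, n]$) and gives $\| t u \|_{L^p} = 0$ for all $u \in \FF$.

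The repair to your construction is one change: put weight $0$ instead of $2^0 = 1$ on $B_1$, i.e.
$$
t = \sum_{k \ge 1} 2^k \chi_{B_{k+1} \setminus B_k} + \i \cdot \chi_{\Omega \setminus \bigcup_k B_k}.
$$
Then still $t \ge 2^n$ off $B_n$ for every $n \ge 1$, so $t$ tightens. Your estimate becomes $\int |t u|^p \le \sum_{k \ge 1} 2^{-kp}$, or $\| t u \|_{L^\i} \le 1/2$ when $p = \i$, and the $K$-term disappears. As written, your proof establishes only the weaker statement for bounded $\FF$. That suffices for the application to the singleton $\{u_0\}$ in \Cref{sec:approx}, but it is not the lemma as stated.
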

	
	\begin{proof}
		$\implies$: Let $\FF$ be tight. By definition
		$$
		\forall n \in \N \, \exists A_n \in \AA_f \colon \sup_{u \in \FF} \| \chi_{\Omega \setminus A_n} u \|_{L^p} \le 2^{-n}.
		$$
		We set
		$$
		t_n(\omega) = n \chi_{\Omega \setminus A_n}(\omega), \quad t(\omega) = \sum_{n = 1}^\i t_n(\omega).
		$$
		Then
		$$
		\essinf_{\omega \in \Omega \setminus A_n} t(\omega) \ge \essinf_{\omega \in \Omega \setminus A_n} t_n(\omega) = n \to \i
		$$
		so $t$ tightens. Finally, we have
		\begin{align*}
			\sup_{u \in \FF} \| t u \|_{L^p}
			& = \sup_{u \in \FF} \| \sum_{n = 1}^\i t_n u \|_{L^p}
			\le \sup_{u \in \FF} \sum_{n = 1}^\i \| t_n u \|_{L^p}
			= \sup_{u \in \FF} \sum_{n = 1}^\i \| n \chi_{\Omega \setminus A_n} u \|_{L^p} \\
			& \le \sum_{n = 1}^\i n \sup_{u \in \FF} \| \chi_{\Omega \setminus A_n} u \|_{L^p}
			\le \sum_{n = 1}^\i n 2^{- n} < \i.
		\end{align*}
		$\impliedby$: Conversely, assume there exists a measurable function $t$ that tightens $\FF$ in $L^p(\mu)$. Then, by definition of tightening,
		$$
		\forall n \in \N \, \exists A_n \in \AA_f \colon \essinf_{\omega \in \Omega \setminus A_n} t(\omega) \ge n.
		$$
		Hence
		$$
		\sup_{u \in \FF} \| \chi_{\Omega \setminus A_n} u \|_{L^p} \le \frac{\sup_{u \in \FF} \| t u \|_{L^p} }{n} \to 0
		$$
		as $n \to \i$, which shows that $\FF$ is tight.
	\end{proof}

\end{document}